\numberwithin{equation}{section}
\newtheorem{theorem}{Theorem}[section]
\newtheorem{proposition}[theorem]{Proposition} 
\newtheorem{lemma}[theorem]{Lemma}
\newtheorem{corollary}[theorem]{Corollary}
\newtheorem{defprop}[theorem]{Definition-Proposition}
\theoremstyle{definition}
\newtheorem{definition}[theorem]{Definition}
\newtheorem{example}[theorem]{Example}
\newcommand{\twosilt}{\mathop{2\text{-}\mathsf{silt}}\nolimits}
\newcommand{\twotilt}{\mathop{2\text{-}\mathsf{tilt}}\nolimits}
\newcommand{\tilt}{\mathop{\mathsf{tilt}}\nolimits}
\newcommand{\soc}{\operatorname{soc}\nolimits}
\newcommand{\Ext}{\operatorname{Ext}\nolimits}
\newcommand{\Hom}{\operatorname{Hom}\nolimits}
\DeclareMathOperator{\thick}{\mathsf{thick}}
\DeclareMathOperator{\moduleCategory}{\mathsf{mod}} 
\renewcommand{\mod}{\moduleCategory}
\DeclareMathOperator{\proj}{\mathsf{proj}}
\newcommand{\Kb}{\mathsf{K}^{\rm b}}
\title[two-term tilting complexes over symmetric algebras with radical cube zero]
{The number of two-term tilting complexes over \\symmetric algebras with radical cube zero}
\author{Takahide Adachi}
\address{T.~Adachi: Faculty of Global and Science Studies, Yamaguchi University, 1677-1 Yoshida, Yamaguchi 753-8541, Japan}
\email{tadachi@yamaguchi-u.ac.jp}
\author{Toshitaka Aoki} 
\address{T.~Aoki: Graduate School of Mathematics, Nagoya University, Frocho, Chikusaku, Nagoya 464-8602, Japan}
\email{m15001d@math.nagoya-u.ac.jp}
\begin{document}

\begin{abstract}
In this paper, we compute the number of two-term tilting complexes for an arbitrary symmetric algebra with radical cube zero over an algebraically closed field.   
Firstly, we give a complete list of symmetric algebras with radical cube zero having only finitely many isomorphism classes of two-term tilting complexes in terms of their associated graphs. 
Secondly, we enumerate the number of two-term tilting complexes for each case in the list.
\end{abstract}

\maketitle

\section{Introduction} 
Tilting theory plays an important role in the study of many areas of mathematics.
A central notion of tilting theory is a tilting complex which is a generalization of a progenerator in Morita theory.
Indeed, its endomorphism algebra is derived equivalent to the original algebra \cite{Rickard89der}.
Hence it is a natural problem to give a classification of tilting complexes for a given algebra.
  
In this paper, we study a classification of two-term tilting complexes for an arbitrary symmetric algebra with radical cube zero over an algebraically closed field $\mathbf{k}$.
Symmetric algebras with radical cube zero have been studied by Okuyama \cite{Okuyama86}, Benson \cite{Benson08} and Erdmann--Solberg \cite{ES}, and also appear in several areas such as \cite{CL,HK,Seidel08}.  
Recently, Green--Schroll \cite{GSa} showed that this class is precisely the Brauer configuration algebras with radical cube zero.

The study of symmetric algebras $A$ with radical cube zero can be reduced to that of algebras with radical square zero. 
For example, as an application of $\tau$-tilting theory (\cite{AIR}), we find in Proposition \ref{reduction} that the functor $-\otimes_A A/\soc A$ gives a bijection
\begin{align}
\twotilt A \longrightarrow \twosilt \, (A/\soc A). \notag
\end{align}
Here, we denote by $\twotilt A$ (respectively, $\twosilt A$) the set of isomorphism classes of basic two-term tilting (respectively two-term silting) complexes for $A$. Notice that tilting complexes coincide with silting complexes for a symmetric algebra $A$ (\cite[Example 2.8]{AI}). 

In \cite{Adachi16b,Aoki18,Zhang13}, they study two-term silting theory (or equivalently $\tau$-tilting theory) for algebras with radical square zero.
The first author (\cite{Adachi16b}) gives a characterization of algebras with radical square zero which are $\tau$-tilting finite (i.e., having only finitely many isomorphism classes of basic two-term silting complexes) by using the notion of single quivers, see Proposition \ref{RSZ}(2). 
Using this result, we give a complete list of $\tau$-tilting finite symmetric algebras with radical cube zero as follows.

Now, let $A$ be a basic connected finite dimensional symmetric $\mathbf{k}$-algebra with radical cube zero.
Let $Q$ be the Gabriel quiver of $A$ and $Q^{\circ}$ the quiver obtained from $Q$ by deleting all loops.
We show in Definition-Proposition \ref{graphA} that $Q^{\circ}$ is the double quiver $Q_G$ (see Definition \ref{def:double quiver}) of a finite connected (undirected) graph $G$ with no loops, i.e., $Q^{\circ}=Q_G$.
We call $G$ the graph of $A$. 

\begin{theorem} \label{theorem1} 
Let $A$ be a basic connected finite dimensional symmetric $\mathbf{k}$-algebra with radical cube zero.  
Then the following conditions are equivalent. 
\begin{enumerate}[\rm (1)]
\item $A$ is $\tau$-tilting finite (or equivalently, $\twotilt A$ is finite).
\item The graph of $A$ is one of graphs in the following list. 
\end{enumerate}

$$
\begin{xy} 
(6, -4)*{(\mathbb{A}_n)}, (-2,-12)*+{1}="A1", (6,-12)*+{2}="A2", (25, -12)*+{n}="An",
(15, -12)*{\cdots}="dot",
{ "A1" \ar@{-} "A2"},
{"A2" \ar@{-} (11,-12)},
{ (19, -12) \ar@{-} "An" },
(48, -4)*{(\mathbb{D}_n)\ 4 \leq n}, (33,-7)*+{1}="D1", (33, -17)*+{2}="D2", (41,-12)*+{3}="D3", 
(60, -12)*+{n}="Dn",
(50, -12)*{\cdots}="dot",
{ "D1" \ar@{-} "D3"},
{ "D2" \ar@{-} "D3"},
{"D3" \ar@{-} (46, -12)},
{ (54, -12) \ar@{-} "Dn"},
(78, -4)*{(\mathbb{E}_6)}, (70, -14)*+{1}="E61", (78, -14)*+{2}="E62", (86, -14)*+{3}="E63", (94, -14)*+{5}="E64", (102, -14)*+{6}="E65", (86, -6)*+{4}="E66", 
{"E61" \ar@{-} "E62"},
{"E62" \ar@{-} "E63"},
{"E63" \ar@{-} "E64"},
{"E64" \ar@{-} "E65"},
{"E63" \ar@{-} "E66"},
(6, -20)*{(\mathbb{E}_7)}, (-2, -31)*+{1}="E71", (6, -31)*+{2}="E72", (14, -31)*+{3}="E73", (22, -31)*+{5}="E74", (30, -31)*+{6}="E75", (38, -31)*+{7}="E76", (14, -23)*+{4}="E77",
{"E71" \ar@{-} "E72"},
{"E72" \ar@{-} "E73"},
{"E73" \ar@{-} "E74"},
{"E74" \ar@{-} "E75"},
{"E75" \ar@{-} "E76"},
{"E73" \ar@{-} "E77"},
(58, -20)*{(\mathbb{E}_8)},
(50, -31)*+{1}="E81", (58, -31)*+{2}="E82", (66, -31)*+{3}="E83", (74, -31)*+{5}="E84", 
(82, -31)*+{6}="E85", (90, -31)*+{7}="E86", (98, -31)*+{8}="E87", (66, -23)*+{4}="E88",
{"E81" \ar@{-} "E82"},
{"E82" \ar@{-} "E83"},
{"E83" \ar@{-} "E84"},
{"E84" \ar@{-} "E85"},
{"E85" \ar@{-} "E86"},
{"E86" \ar@{-} "E87"},
{"E83" \ar@{-} "E88"},
(125, -4)*{(\widetilde{\mathbb{A}}_{n-1}) \ n\colon \mathrm{odd}}, 
(125, -12)*+{1}="b1", (115, -18)*+{2}="b2", (115, -28)*+{3}="b3",(125, -32)*+{4}="b4", 
(135, -18)*+{n}="bn",
(135, -26)*{\rotatebox{90}{$\cdots$}}="ddot",
{"b1" \ar@{-} "b2"},
{"b2" \ar@{-} "b3"},
{"b3" \ar@{-} "b4"},
{"b1" \ar@{-} "bn"},
{"bn" \ar@{-} (135, -22)},
{ (133, -30) \ar@{-} "b4"},
\end{xy}\vspace{3mm} \noindent
$$

$$
\begin{xy}
(26, 0)*{({\rm I}_n) \ 4 \leq n},
(28, -7)*+{1}="c1", (21,-15)*+{2}="c2", (36, -15)*+{3}="c3", 
(36, -23)*+{4}="c4", (36, -28)="c5", (36, -35)="c6", 
(36.5, -31)*{\rotatebox{90}{$\cdots$}}="ddot",
(36, -39)*+{n}="c7",  
{"c1"  \ar@{-} "c2"},
{"c1"  \ar@{-} "c3"},
{"c2"  \ar@{-} "c3"},
{"c3"  \ar@{-} "c4"},
{"c4"  \ar@{-} "c5"},
{"c6"  \ar@{-} "c7"},
(53, 0)*{({\rm II}_n) \ 5 \leq n \leq8},
(52, -7)*+{1}="d1", (45,-15)*+{2}="d2", (59, -15)*+{3}="d3",  
(45, -23)*+{4}="d4", (59, -23)*+{5}="d5", 
(59, -28)="d6", (59, -35)="d7", 
(59.5, -31)*{\rotatebox{90}{$\cdots$}}="edot",
(59, -39)*+{n}="d8",  
{"d1"  \ar@{-} "d2"},
{"d1"  \ar@{-} "d3"},
{"d2"  \ar@{-} "d3"},
{"d2"  \ar@{-} "d4"},
{"d3"  \ar@{-} "d5"},
{"d5"  \ar@{-} "d6"},
{"d7"  \ar@{-} "d8"},
(72, 0)*{({\rm III})},
(79,-7)*+{1}="e1", (79, -15)*+{2}="e2", (72,-22)*+{3}="e3", 
(72, -30)*+{4}="e4", (86,-22)*+{5}="e5", (86,-30)*+{6}="e6",
{"e1"  \ar@{-} "e2"},
{"e2"  \ar@{-} "e3"},
{"e2"  \ar@{-} "e5"},
{"e3"  \ar@{-} "e4"},
{"e4"  \ar@{-} "e6"},
{"e5"  \ar@{-} "e6"},
(97, 0)*{({\rm IV})},
(104, -7)*+{1}="f1",
(104, -15)*+{2}="f2", (97,-23)*+{3}="f3", (111, -23)*+{4}="f4",  
(97, -31)*+{5}="f5", (111, -31)*+{6}="f6",  
{"f1"  \ar@{-} "f2"},
{"f2"  \ar@{-} "f3"},
{"f2"  \ar@{-} "f4"},
{"f3"  \ar@{-} "f4"},
{"f3"  \ar@{-} "f5"},
{"f4"  \ar@{-} "f6"},
(121, 0)*{({\rm V})},
(128, -7)*+{1}="g1", (121,-15)*+{2}="g2", (135, -15)*+{3}="g3",  
(121, -23)*+{4}="g4", (121, -31)*+{5}="g5",  (135, -23)*+{6}="g6", 
(135, -31)*+{7}="g7",
{"g1"  \ar@{-} "g2"},
{"g1"  \ar@{-} "g3"},
{"g2"  \ar@{-} "g3"},
{"g2"  \ar@{-} "g4"},
{"g3"  \ar@{-} "g6"},
{"g4"  \ar@{-} "g5"},
{"g6"  \ar@{-} "g7"}, 
\end{xy}
$$
\end{theorem}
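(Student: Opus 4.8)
The plan is to translate condition~(1) into the $\tau$-tilting finiteness of an associated radical square zero algebra and then to carry out an explicit graph-theoretic classification. First I would recall that, since $\Lambda$ is symmetric, every two-term silting complex is automatically a tilting complex, so $\twotilt\Lambda$ is exactly the set of two-term silting complexes; by the Adachi--Iyama--Reiten bijection \cite{AIR} this set is finite if and only if $\Lambda$ is $\tau$-tilting finite. Using the reduction of \cite{DIRRT, EJR} along the ideal $\mathrm{soc}\Lambda = \mathrm{rad}^2\Lambda$, the support $\tau$-tilting posets of $\Lambda$ and of $\Lambda/\mathrm{rad}^2\Lambda$ are isomorphic, and the latter is the radical square zero algebra $kQ_\Lambda/\mathrm{rad}^2$ whose quiver $Q_\Lambda$ is $Q_G$ together with the added loops. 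Thus condition~(1) is equivalent to the $\tau$-tilting finiteness of $kQ_\Lambda/\mathrm{rad}^2$, which is governed by \cite{Ada, Ao}.

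Next I would dispose of the loops. A loop at a vertex contributes only to the local structure at that vertex, and a local radical square zero algebra has exactly two support $\tau$-tilting modules; I would make this precise to show that deleting all loops affects neither the truth of condition~(1) nor the underlying graph $G$. This reduces the problem to characterizing the simple graphs $G$ for which $kQ_G/\mathrm{rad}^2$ is $\tau$-tilting finite, and it is here that one must show $G$ lies in the displayed list.

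For the implication (2)$\Rightarrow$(1) I would verify the criterion of \cite{Ada, Ao} family by family. For the trees $\mathbb{A}_n,\mathbb{D}_n,\mathbb{E}_6,\mathbb{E}_7,\mathbb{E}_8$ this is the classical Dynkin case; for the odd cycles $\tilde{\mathbb{A}}_{n-1}$ and the sporadic graphs $\mathrm{I}_n$--$\mathrm{V}$ one checks the criterion directly, the parity of the cycle and the bounded size (the cap $n\le 8$ for $\mathrm{II}_n$ mirroring $\mathbb{E}_8$) being precisely the features the criterion detects. For the converse (1)$\Rightarrow$(2) I would argue contrapositively: assuming $G$ is a connected simple graph not in the list, I would locate inside $Q_G$ a minimal $\tau$-tilting infinite configuration --- an even cycle (which is why $\tilde{\mathbb{A}}_{n-1}$ survives only for $n$ odd), a copy of $K_4$, two vertex-disjoint odd cycles, or an exceptional graph exceeding the $\mathbb{E}_8$ bound --- and conclude that $kQ_G/\mathrm{rad}^2$, hence $\Lambda$, has infinitely many support $\tau$-tilting modules.

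The main obstacle is the combinatorial heart of the converse: proving that this list of forbidden configurations is complete, i.e.\ that every connected simple graph avoiding all of them occurs in the displayed list. I expect the argument to split according to whether $G$ is bipartite --- where it collapses to the classical Dynkin/Euclidean dichotomy --- or non-bipartite, where the presence of an odd cycle forces a delicate case analysis on the cycle length, the number and lengths of the attached tails, and the number of independent cycles, producing exactly $\tilde{\mathbb{A}}_{n-1}$ with $n$ odd together with the finitely many sporadic graphs $\mathrm{I}_n$--$\mathrm{V}$. Keeping this enumeration exhaustive while ruling out any further graphs is the step that will demand the most care.
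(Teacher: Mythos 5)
Your overall strategy is the same as the paper's: reduce condition (1) to a combinatorial condition on bipartite subquivers of $Q_G$ and then classify the admissible graphs. (The paper gets there via the bijection $\twotilt\Lambda\to\bigsqcup_{\epsilon}\tilt(kQ_{\epsilon})^{\rm op}$ of [Ao, Theorem 3.3] rather than via the $\tau$-tilting reduction of \cite{EJR} followed by [Ada], but these are two routes to the same criterion, and your disposal of loops is also fine since a loop never survives in any $Q_{\epsilon}$.)

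The genuine gap is that the combinatorial heart of the theorem --- which you correctly identify as the hard part --- is not actually carried out, and the list of ``minimal forbidden configurations'' you propose to induct on is not the right one, so the enumeration cannot be checked for exhaustiveness as stated. The correct primitive obstructions are: an even cycle as a subgraph of $G$ (which produces a bipartite subquiver of type $\tilde{\mathbb{A}}$), and an extended Dynkin \emph{tree} $\tilde{\mathbb{D}}_m$ or $\tilde{\mathbb{E}}_{6},\tilde{\mathbb{E}}_{7},\tilde{\mathbb{E}}_{8}$ occurring as a \emph{subtree} of $G$. The key structural fact making this workable --- and absent from your outline --- is that once $G$ has no even cycle, the connected bipartite subquivers of $Q_G$ correspond exactly (two-to-one) to the subtrees of $G$ (Proposition \ref{subtree-bipartite}), so the finiteness condition becomes ``every subtree of $G$ is Dynkin.'' From this one deduces: trees must be Dynkin; two distinct odd cycles are impossible (they yield either an even cycle or a $\tilde{\mathbb{D}}$ subtree, so your ``vertex-disjoint'' hypothesis is both too strong and redundant, and $K_4$ is already excluded by its $4$-cycles); and for a graph consisting of a single odd cycle with trees attached, one lists the minimal graphs containing $\tilde{\mathbb{D}}$ (Lemma \ref{remove-ext-D}) and $\tilde{\mathbb{E}}_{6},\tilde{\mathbb{E}}_{7},\tilde{\mathbb{E}}_{8}$ (Lemma \ref{remove-ext-E}) as subtrees, separately for cycle length $3$ and cycle length $\geq 5$, and takes all proper connected non-tree subgraphs avoiding them. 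That explicit case analysis is what produces $\tilde{\mathbb{A}}_{n-1}$ ($n$ odd) and the families ${\rm I}_n$--${\rm V}$ and certifies that nothing else occurs; without it your argument remains a plan rather than a proof.
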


The second author (\cite{Aoki18}) classifies two-term silting complexes for an arbitrary algebra with radical square zero by using tilting modules over a path algebra (see Proposition \ref{RSZ}(1)).
Since the cardinality of the set of isomorphism classes of tilting modules over a path algebra is well known, 
this provides us an explicit way to compute the number of them. We use this result to determine the number $\#\twotilt A$ for each graph $G$ in the list of Theorem \ref{theorem1}. 

\begin{theorem} \label{theorem2}
In Theorem \ref{theorem1}, the number $\#\twotilt A$ depends only on the graph $G$ of $A$ and is given as follows.
 
{\fontsize{9pt}{0.4cm}\selectfont
\begin{table}[h]
{\renewcommand\arraystretch{1.3} 
\begin{tabular}{|c||c|c|c|c|c|c|c|c|c|c|c|c|c|c|} \hline
$G$ & $\mathbb{A}_n$& $\mathbb{D}_{n}$&$\mathbb{E}_6 $&$ \mathbb{E}_7$&$ \mathbb{E}_8$& $\tilde{\mathbb{A}}_{n-1}$ & $ {\rm I}_n$ & 
${\rm II}_5$& ${\rm II}_6$& ${\rm II}_7$ & ${\rm II}_8$ & ${\rm III}$ & ${\rm IV}$ & ${\rm V}$ \\ \hline 
$\# \twotilt A$ &$\binom{2n}{n}$ & $a_n$ & $1700$ & $8872$ & $54066$ & $2^{2n-1}$ & $b_n$ & $632$ & $2936$ & $11306$ & $75240$ & $3108$& $4056$& $17328$   \\ \hline
\end{tabular}
}
\end{table}}
\noindent Here, for any $n\ge 4$, let $a_n:= 6\cdot 4^{n-2}-2\binom{2(n-2)}{n-2}$ and $b_n:=6\cdot 4^{n-2} + 2\binom{2n}{n} -4\binom{2(n-1)}{n-1}-4\binom{2(n-2)}{n-2}$.
\end{theorem}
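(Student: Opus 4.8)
The plan is to convert the count into a weighted enumeration over sign vectors and then evaluate the resulting sums by generating functions. First I would pass from the symmetric radical-cube-zero algebra $\Lambda$ to the radical-square-zero algebra $\Lambda_0=\Lambda/\operatorname{soc}\Lambda$. Since $\operatorname{soc}\Lambda=\operatorname{rad}^2\Lambda$ and $\Lambda$ is symmetric, the reduction theorems for (support) $\tau$-tilting modules under such quotients (\cite{DIRRT, EJR}) identify $\twotilt\Lambda$ with the corresponding set for $\Lambda_0$. I would then apply Aoki's classification \cite{Ao}: by sign decomposition the two-term silting complexes of a radical-square-zero algebra are partitioned according to a sign vector $\epsilon\in\{+,-\}^{(Q_G)_0}$, and those of sign $\epsilon$ are in bijection with the tilting modules of the hereditary algebra $H_\epsilon$ whose quiver keeps exactly the arrows of $Q_G$ running from a $+$-vertex to a $-$-vertex; the symmetric (Frobenius) structure is what forces the tilting, rather than merely silting, condition, selecting the full-support tilting modules. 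This yields the master formula $\#\twotilt\Lambda=\sum_{\epsilon}\#\tilt H_\epsilon$. Two features simplify it: a loop can never join two vertices of opposite sign, so loops never survive into any $H_\epsilon$, which explains why the answer is independent of the number of added loops; and the underlying graph $G_\epsilon$ of $H_\epsilon$ is just the bichromatic subgraph of $G$ for the $2$-coloring $\epsilon$, so $\#\tilt H_\epsilon=t(G_\epsilon)$ depends only on $G_\epsilon$ (tilting counts over Dynkin algebras being orientation-independent) and is multiplicative over connected components.

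The heart of the computation is the identity $\sum_{\epsilon\in\{+,-\}^n}\prod_{c}C_{|c|}=\binom{2n}{n}$, where $c$ runs over the maximal alternating blocks of $\epsilon$ (the components of $G_\epsilon$ when $G=\mathbb A_n$) and $C_m=t(\mathbb A_m)$ is the Catalan number. I would prove it by observing that a sign vector is determined by its first sign together with the composition of $n$ recording its block lengths, so the generating function is $2/(2-C(x))$ with $C(x)=\sum_m C_m x^m$; a short computation gives $2/(2-C(x))=1+1/\sqrt{1-4x}$, whose coefficients are the central binomials. This settles $\mathbb A_n$ at once. The same transfer-function machinery handles $\tilde{\mathbb A}_{n-1}$ ($n$ odd) through the cyclic (trace) version: oddness forces at least one monochromatic edge, so every $G_\epsilon$ is a disjoint union of paths—this is precisely why even $n$ is excluded, as the alternating $2$-coloring would produce the whole even cycle, an extended Dynkin diagram with infinitely many tilting modules—and the trace evaluates to $2^{2n-1}$.

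For the graphs with a branch vertex, the bichromatic subgraph can acquire a genuine type-$\mathbb D$ (or type-$\mathbb E$) component. For $\mathbb D_n$ I would split the sum according to the signs at the degree-three vertex and its three neighbors, describe the tail $4-5-\cdots-n$ by the type-A generating function, and insert the classical tilting numbers $t(\mathbb D_m)$ at the branch; since both $\sum_m 4^m x^m=1/(1-4x)$ and $\sum_m\binom{2m}{m}x^m=1/\sqrt{1-4x}$ enter, the closed form is the stated $a_n=6\cdot 4^{n-2}-2\binom{2(n-2)}{n-2}$, and the analogous analysis of the triangle-plus-tail graph $\mathrm I_n$ gives $b_n$. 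The two signatures $4^{n-2}$ and central binomials visible in $a_n$ and $b_n$ confirm that only these two basic series occur. The remaining entries are finite: $\mathbb E_6,\mathbb E_7,\mathbb E_8$ and $\mathrm{II}_5,\dots,\mathrm{II}_8,\mathrm{III},\mathrm{IV},\mathrm V$ are obtained by summing $t(G_\epsilon)$ over the $2^{|V(G)|}$ sign vectors, organized by the branch/cycle structure and using the tabulated Dynkin tilting numbers $t(\mathbb A_m),t(\mathbb D_m),t(\mathbb E_m)$.

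The main obstacle is the combinatorial evaluation for the branched infinite families $\mathbb D_n$ and $\mathrm I_n$ (and implicitly $\mathrm{II}_n$): there the sum genuinely mixes type-A blocks with a type-$\mathbb D$ block at the branch vertex, and collapsing the resulting convolution of $1/(1-4x)$ and $1/\sqrt{1-4x}$ into the stated closed forms with exactly the right correction coefficients demands careful generating-function bookkeeping together with a correct catalogue of Dynkin tilting numbers. The other delicate point is justifying the master formula itself—in particular that the symmetric hypothesis upgrades ``silting'' to full-support ``tilting'' on each $H_\epsilon$—but once that is secured the individual evaluations, though tedious, are routine.
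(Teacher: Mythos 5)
Your proposal follows essentially the same route as the paper: the sign decomposition $\#\twotilt\Lambda=\sum_{\epsilon}\#\tilt kQ_{\epsilon}$ (the paper's Proposition \ref{bijection} and Corollary \ref{finiteness}), orientation-independence and loop-independence, and then a case-by-case evaluation that splits the $\mathbb{D}_n$ and $\mathrm{I}_n$ sums according to the signs at the branch vertex, inserts the Dynkin tilting numbers of Proposition \ref{path alg}(2), and brute-forces the exceptional cases. The only real difference is cosmetic: where the paper cites [Ao] for the $\mathbb{A}_n$ and $\tilde{\mathbb{A}}_{n-1}$ counts and writes the $\mathbb{D}_n$ answer as an explicit convolution over the sets $J_{1},\dots,J_{6}$ to be checked against $a_n$, you package the same block decomposition as a generating-function identity ($2/(2-C(x))=1+1/\sqrt{1-4x}$), which is a correct and self-contained way to obtain the same sums.
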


We remark that the numbers for Dynkin graphs of type $\mathbb{A}$, $\mathbb{D}$ and $\mathbb{E}$ in the list are precisely the biCatalan numbers introduced by \cite{BR} in the context of Coxeter-Catalan combinatorics. 
Our results for Dynkin graphs are independently obtained by \cite{DIRRT} in the study of biCambrian lattices for preprojective algebras.  

We also remark that we can generalize our results for Brauer configuration algebras in terms of multiplicities. 
A Brauer configuration algebra is defined by a configuration and a multiplicity function. 
The configuration of a Brauer configuration algebra with radical cube zero corresponds to a graph \cite{GSa}. 
By \cite{EJR}, one can show that the number of two-term tilting complexes over Brauer configuration algebras is independent of the multiplicity. 
Therefore, we can also apply our results to any Brauer configuration algebra obtained by replacing the multiplicity of a Brauer configuration associated with a graph in the list of Theorem \ref{theorem1}.

This paper is organized as follows.
In Section \ref{sec:preliminaries}, we recall the definition of algebras with radical square zero and their two-term silting theory which are needed in this paper.
In Section \ref{sec:RCZ}, we study symmetric algebras with radical cube zero together with the correspondence algebra with radical square zero.
Our main results are Theorem \ref{reduced ver} and Corollary \ref{number by graph} which provide us an explicit way to compute the number of two-term tilting complexes for a given symmetric algebra with radical cube zero.
In Section \ref{main theorem}, we prove Theorems \ref{theorem1} and \ref{theorem2} by using results shown in the previous section.

\section{Preliminaries} \label{sec:preliminaries}

Throughout this paper, $\mathbf{k}$ is an algebraically closed field. 
We recall that any basic connected finite dimensional $\mathbf{k}$-algebra $A$ is isomorphic to a bound quiver algebra $A\cong \mathbf{k}Q/I$, where $Q$ is a finite connected quiver and $I$ is an admissible ideal in the path algebra $\mathbf{k}Q$ of the quiver $Q$. We call $Q_A:=Q$ the \emph{Gabriel quiver} of $A$.

\subsection{Silting complexes}

Let $A$ be a basic (not necessary connected) finite dimensional $\mathbf{k}$-algebra. We denote by $\mod A$ the category of finitely generated right $A$-modules and by $\proj A$ the category of finitely generated projective right $A$-modules.
Let $\Kb(\proj A)$ denote the homotopy category of bounded complexes of objects of $\proj A$. 
For a complex $X\in \Kb(\proj A)$, we say that $X$ is \emph{basic} if it is a direct sum of pairwise non-isomorphic indecomposable objects. 

\begin{definition}
A complex $T$ in $\Kb(\proj A)$ is said to be \emph{presilting} if it satisfies 
\begin{align}
\Hom_{\Kb(\proj A)}(T,T[i])=0 \notag
\end{align}
for all positive integers $i$.
A presilting complex $T$ is called a \emph{silting complex} if it satisfies $\thick T=\Kb(\proj A)$, where $\thick T$ is the smallest triangulated full subcategory which contains $T$ and is closed under taking direct summands.
In addition, a silting complex $T$ is called a \emph{tilting complex} if $\Hom_{\Kb(\proj A)}(T,T[i])=0$ for all non-zero integers $i$.
\end{definition}

We restrict our interest to the set of two-term silting complexes. Here, a complex $T=(T^{i},d^{i})$ in $\Kb(\proj A)$ is said to be \emph{two-term} if it is isomorphic to a complex concentrated only in degree $0$ and $-1$, i.e., 
\begin{align}
(T^{-1}\overset{d^{-1}}{\rightarrow} T^0) =  \cdots \to 0 \to T^{-1} \overset{d^{-1}}{\longrightarrow} T^0 \to 0 \to \cdots \notag
\end{align}
We denote by $\twosilt A$ (respectively, $\twotilt A$) the set of isomorphic classes of basic two-term silting (respectively, two-term tilting) complexes for $A$. 

Now, we call $M\in \mod A$ a \emph{tilting module} if all the following conditions are satisfied: (i) the projective dimension of $M$ is at most $1$, (ii) $\Ext_A^1(M,M)=0$, and (iii) $|M|=|A|$, where $|M|$ denotes the number of pairwise non-isomorphic indecomposable direct summands of $M$. We denote by $\tilt A$ the set of isomorphism classes of basic tilting $A$-modules. 
By definition, we can naturally regard a tilting $A$-module $M$ as a tilting complex. More precisely, by taking a minimal projective presentation $P_1 \overset{f}{\to} P_0\to M \to 0$ of $M$ in $\mod A$, the two-term complex $(P_{1} \overset{f}{\to} P_0)$ provides a tilting complex in $\Kb(\proj A)$. 

The number of tilting modules over a path algebra of a Dynkin quiver is well known. 

\begin{proposition}{\rm (see \cite{ONFR} for example)} \label{tilting number}
Let $Q$ be a quiver whose underlying graph $\Delta$ is one of Dynkin graphs of type $\mathbb{A}$, $\mathbb{D}$ and $\mathbb{E}$. Then the number $\#\tilt \mathbf{k}Q$ is given by the following table and does not depend on the orientation of $Q$.
\begin{table}[h] 
\begin{center}
{\renewcommand\arraystretch{1.3} 
\begin{tabular}{|c||c|c|c|c|c|c|} \hline
$\Delta$   &  $\mathbb{A}_n \, (n\geq 1)$  &$\ \mathbb{D}_n \,(n\geq4)$& $\mathbb{E}_6$ &  
$\mathbb{E}_7$ & $\mathbb{E}_8$  \\ \hline
$\# \tilt \mathbf{k}Q$ & $\frac{1}{n+1}\binom{2n}{n}$ & $\frac{3n-4}{2n}\binom{2(n-1)}{n-1}$ & $418$ & $2431$ & $17342$\\ \hline
\end{tabular}}
\end{center} 
\end{table}
\end{proposition} 

More generally, if $Q$ is a disjoint union of Dynkin quivers $Q_{\lambda}$ ($\lambda \in \Lambda$), then we have 
\begin{equation} \label{disjoint Dynkin}
  \#\tilt \mathbf{k}Q = \prod_{\lambda\in \Lambda} \#\tilt \mathbf{k}Q_{\lambda}
\end{equation}
and this number is completely determined by a collection of the underlying graphs $\Delta_{\lambda}$ of $Q_{\lambda}$ for all $\lambda\in \Lambda$ as in Proposition \ref{tilting number}.

\subsection{Algebras with radical square zero}

Let $A$ be a basic connected finite dimensional $\mathbf{k}$-algebra. 
We say that $A$ is an algebra with \emph{radical square zero} (respectively, \emph{radical cube zero}) if $J^2=0$ but $J\neq 0$ (respectively, $J^3=0$ but $J^2\neq 0$), where $J$ is the Jacobson radical of $A$. 
For simplicity, we abbreviate an algebra with radical square zero (respectively, radical cube zero) by a RSZ (respectively, RCZ) algebra.

We first recall that any basic connected finite dimensional RSZ $\mathbf{k}$-algebra $A$ is isomorphic to a bound quiver algebra $\mathbf{k}Q/I$, where $Q:=Q_A$ is the Gabriel quiver of $A$ and $I$ is the two-sided ideal in $\mathbf{k}Q$ generated by all paths of length $2$.

Next, let $Q=(Q_{0},Q_{1})$ be a finite connected quiver, where $Q_{0}$ is the vertex set and $Q_{1}$ is the arrow set. 
We denote by $Q^{\rm op}$ the opposite quiver of $Q$. 
For a map $\epsilon\colon Q_{0}\to \{ \pm 1\}$, we define a quiver $Q_{\epsilon}$, called a {\it single quiver} of $Q$, as follows: 
\begin{itemize}
\item The set of vertices is $Q_0$. 
\item We draw an arrow $a \colon i\to j$ in $Q_{\epsilon}$ whenever there exists an arrow $a\colon i\to j$ with $\epsilon(i)=+1$ and $\epsilon(j)=-1$. 
\end{itemize}
Note that $Q_{\epsilon}$ is bipartite (i.e., each vertex is either a sink or a source), but not connected in general.
Since it has no loops by definition, we have $Q_{\epsilon}=(Q^{\circ})_{\epsilon}$, where $Q^{\circ}$ denotes the quiver obtained from $Q$ by deleting all loops. 

We give a connection between two-term silting complexes for a RSZ algebra and tilting modules over path algebras.

\begin{proposition}\label{RSZ}
Let $A$ be a basic connected finite dimensional RSZ $\mathbf{k}$-algebra and $Q_A$ the Gabriel quiver of $A$.
Let $Q:=(Q_A)^{\circ}$ be the quiver obtained from $Q_A$ by deleting all loops.
Then the following statements hold.
\begin{enumerate}[\rm (1)]
\item \textnormal{(\cite[Theorem 1.1]{Aoki18})} There is a bijection
\begin{align}
\twosilt A \longrightarrow \bigsqcup_{\epsilon \colon Q_{0} \rightarrow\{\pm 1\}} 
\tilt \mathbf{k}(Q_{\epsilon})^{\rm op}. \notag
\end{align}
\item \textnormal{(\cite{Adachi16b,Aoki18})} The following conditions are equivalent.
\begin{enumerate}[\rm (a)]
\item $\twosilt A$ is finite.
\item For every map $\epsilon\colon Q_{0}\to \{ \pm 1\}$, the underlying graph of the single quiver $Q_{\epsilon}$ is a disjoint union of Dynkin graphs of type $\mathbb{A}$, $\mathbb{D}$ and $\mathbb{E}$.
\end{enumerate}
\item If one of equivalent conditions of \textnormal{(2)} holds, we have 
\begin{align}
\#\twosilt A = \sum_{\epsilon \colon Q_0 \to \{\pm1\}} \# \tilt \mathbf{k}(Q_{\epsilon})^{\rm op}.\notag
\end{align}
\end{enumerate}
\end{proposition}

We remark that we can replace the quiver $Q$ with the Gabriel quiver $Q_A$ of $A$ in Proposition \ref{RSZ} since we have $(Q_A)_{\epsilon} = Q_{\epsilon}$ for any map $\epsilon\colon Q_0 \to \{\pm1\}$.

\section{Two-term tilting complexes over symmetric RCZ algebras} \label{sec:RCZ}

Let $A$ be a basic connected finite dimensional symmetric RCZ $\mathbf{k}$-algebra. Then $\overline{A}:=A/\soc A$ is a RSZ algebra by definition. Moreover, the Gabriel quiver of $\overline{A}$ coincides with the Gabriel quiver of $A$ since $\soc A$ is contained in the square of the Jacobson radical of $A$. 

The following is basic. Here, we remember that silting complexes coincide with tilting complexes for a symmetric algebra $A$ (\cite[Example 2.8]{AI}). In particular, $\twotilt A=\twosilt A$. 

\begin{proposition} \cite[Theorem 3.3]{Adachi16a} \label{reduction}
Let $A$ be a basic connected finite dimensional symmetric RCZ $\mathbf{k}$-algebra and $\overline{A}:=A/\soc A$. Then the functor $-\otimes_A \overline{A}$ gives a bijection 
\begin{align}
\twotilt A \longrightarrow \twosilt \overline{A}. \notag
\end{align}  
\end{proposition}

Next, the following observations provide us a combinatorial framework of studying two-term tilting complexes over symmetric RCZ algebras. 

\begin{definition} \label{def:double quiver}
For a finite connected graph $G$ with no loops, we define a quiver $Q_G$ as follows. 
\begin{itemize}
\item The set of vertices of $Q_G$ is the set of vertices of $G$.  
\item We draw two arrows $a^{\ast} \colon i\to j$ and $a^{\ast\ast} \colon j\to i$ whenever there exists an edge $a$ of $G$ connecting $i$ and $j$.  
\end{itemize}
We call $Q_G$ the \emph{double quiver} of $G$. Notice that $Q_G$ has no loops since so does $G$.
\end{definition}

\begin{defprop} \label{graphA}
Let $A$ be a basic connected finite dimensional symmetric RCZ $\mathbf{k}$-algebra. 
Let $Q_A$ be the Gabriel quiver of $A$ and $Q:=(Q_A)^{\circ}$ the quiver obtained from $Q_A$ by deleting all loops. Then $Q$ is the double quiver $Q_G$ of a finite connected (undirected) graph $G$ with no loops. We call $G$ the graph of $A$. 
\end{defprop}

\begin{proof}
For the Gabriel quiver $Q_A$ of $A$, let $\pi \colon \mathbf{k}Q_A \to A$ be a canonical surjection. 
For any vertex $i$ of $Q_A$, let $P_i$ be the indecomposable projective $A$-module corresponding to $i$. 
By definition, $P_i$ has Loewy length $3$ and its simple socle is isomorphic to the simple top $S_{i}:=P_{i}/P_{i}J$.

We recall from \cite[Proposition 5.6]{GSa} that our algebra $A$ is special multiserial (we refer to \cite[Definition 2.2]{GSa} for the definition of special multiserial algebras). Then each arrow $a\colon i\to j$ of $Q_A$ determines the unique arrow $\sigma(a)$ such that $\pi(a\sigma(a))\neq 0$, and the correspondence $\sigma$ gives a permutation of the set of arrows of $Q_A$, see \cite[Definition 4.8]{GSa}. In addition, the element $\pi(a\sigma(a)\sigma^2(a)\cdots\sigma^{m-1}(a))$ lies in the socle of $P_i$, where $m$ is the length of the $\sigma$-orbit containing the arrow $a$. 
Since $P_i$ has Loewy length $3$, $m=2$ must hold. In particular, $\sigma(a)$ is the unique arrow $\sigma(a)\colon j\to i$ such that $\pi(\sigma(a)a)\neq 0$. 

Now, we can restrict the permutation $\sigma$ to the subset consisting of all arrows which are not loops. Then we define a finite undirected graph $G$ as follows: The set of vertices of $G$ bijectively corresponds to the set of vertices of $Q_A$, and the set of edges of $G$ is naturally given by the set of unordered pairs $\{a,\sigma(a)\}$ for all arrows $a$ of $Q_A$ which are not loops. Then $G$ is the desired one as $(Q_A)^{\circ}=Q_G$ from our construction.
\end{proof}

As we mentioned before, the algebras $A$ and $\overline{A}:=A/\soc A$ have the same Gabriel quiver $Q_A = Q_{\overline{A}}$. Therefore, $(Q_A)^{\circ}= (Q_{\overline{A}})^{\circ}$ is the double quiver $Q_G$ of a common finite connected graph $G$ with no loops by Definition-Proposition \ref{graphA}. 

\begin{theorem} \label{reduced ver}
Let $A$ be a basic connected finite dimensional symmetric RCZ $\mathbf{k}$-algebra and $\overline{A}:=A/\soc A$.  
Let $Q_A$ be the Gabriel quiver of $A$ and $Q:=(Q_A)^{\circ}$ the quiver obtained from $Q_A$ by deleting all loops.
\begin{enumerate}[\rm (1)]
\item The following conditions are equivalent.
\begin{enumerate}[\rm (a)]
\item $\twotilt A$ is finite.
\item $\twosilt \overline{A}$ is finite.  
\item For every map $\epsilon \colon Q_{0} \rightarrow \{\pm 1\}$, the underlying graph of the single quiver $Q_{\epsilon}$ is a disjoint union of Dynkin graphs of type $\mathbb{A}$, $\mathbb{D}$ and $\mathbb{E}$. 
\end{enumerate}
\item Fix any vertex $v\in Q_{0}$. If one of the equivalent conditions in \textnormal{(1)} is satisfied, then the following equalities hold.
\begin{align}
\# \twotilt A = \# \twosilt \overline{A} = 2 \cdot \sum_{\substack{\epsilon\colon Q_{0} \rightarrow \{\pm 1\} \\ \epsilon(v)=+1}} \# \tilt \mathbf{k}{Q}_{\epsilon}. \notag 
\end{align}
\end{enumerate}
\end{theorem}
\begin{proof}
(1) It follows from Propositions \ref{RSZ}(2) and \ref{reduction}.
  
(2) By Proposition \ref{reduction}, we have $\# \twotilt A = \# \twosilt \overline{A}$.
We show the second equality.
Let $v$ be a vertex in $Q$.
By Proposition \ref{RSZ}(1), we have 
\begin{align}
\# \twosilt \overline{A}
= \sum_{\epsilon \colon Q_{0} \rightarrow\{\pm 1\}} \#\tilt \mathbf{k}(Q_{\epsilon})^{\rm op}
= \sum_{\substack{\epsilon\colon Q_{0} \rightarrow \{\pm 1\} \\ \epsilon(v)=+1}} \# \tilt \mathbf{k}(Q_{\epsilon})^{\rm op}
+ \sum_{\substack{\epsilon\colon Q_{0} \rightarrow \{\pm 1\} \\ \epsilon(v)=-1}} \# \tilt \mathbf{k}(Q_{\epsilon})^{\rm op}. \notag
\end{align}
For a map $\epsilon\colon Q_{0}\to \{ \pm 1\}$, we define a map $-\epsilon\colon Q_{0}\to \{ \pm 1\}$ by $(-\epsilon)(i):=-\epsilon(i)$ for all $i\in Q_{0}$.
Since $Q$ is the double quiver of the graph $G$ of $A$, we have $Q_{-\epsilon}=(Q_{\epsilon})^{\mathrm{op}}$. 
This implies that $Q_{\epsilon}$ and $Q_{-\epsilon}$ have the same underlying graph $\Delta$.
By our assumption, $\Delta$ is a disjoint union of Dynkin graphs.
Thus we obtain $\# \tilt \mathbf{k}Q_{\epsilon}= \# \tilt \mathbf{k}Q_{-\epsilon}$ because the number of non-isomorphic tilting modules over a path algebra of Dynkin type does not depend on orientation, see Proposition \ref{tilting number}.
Hence we have 
\begin{align}
\sum_{\substack{\epsilon\colon Q_{0} \rightarrow \{\pm 1\} \\ \epsilon(v)=+1}} \# \tilt \mathbf{k}Q_{\epsilon}
=\sum_{\substack{\epsilon\colon Q_{0} \rightarrow \{\pm 1\} \\ \epsilon(v)=+1}} \# \tilt \mathbf{k}(Q_{\epsilon})^{\rm op}
= \sum_{\substack{\epsilon\colon Q_{0} \rightarrow \{\pm 1\} \\ \epsilon(v)=-1}} \# \tilt \mathbf{k}(Q_{\epsilon})^{\rm op}. \notag
\end{align}
This finishes the proof. 
\end{proof}

For our convenience, we restate Theorem \ref{reduced ver} in terms of undirected graphs. 
Let $G=(G_0,G_1)$ be a finite connected graph with no loops, where $G_0$ is the set of vertices and $G_1$ is the set of edges. For each map $\epsilon \colon G_0\to \{\pm1\}$, let $G_{\epsilon}$ be the graph obtained from $G$ by removing all edges between vertices $i,j$ with $\epsilon(i)=\epsilon(j)$. From our construction, $G_{\epsilon}$ is precisely the underlying graph of the quiver $Q_{\epsilon}$, where $Q:=Q_G$ is the double quiver of $G$ with vertex set $Q_0=G_0$. In particular, $Q_{\epsilon}$ is a disjoint union of Dynkin quivers if and only if $G_{\epsilon}$ is a disjoint union of Dynkin graphs. 

Now, we recall that, for a quiver $Q$ whose underlying graph $\Delta$ is a disjoint union of Dynkin graphs, the number $\#\tilt \mathbf{k}Q$ does not depend on orientation of $Q$ and given by (\ref{disjoint Dynkin}). Then, we set $|\Delta| := \# \tilt \mathbf{k}Q$. 

\begin{corollary} \label{number by graph}
Let $A$ be a basic connected finite dimensional symmetric RCZ $\mathbf{k}$-algebra and $G$ the graph of $A$. 
\begin{enumerate}[\rm (1)]
\item The following conditions are equivalent. 
\begin{enumerate}[\rm (a)]
\item $\twotilt A$ is finite. 
\item For every map $\epsilon \colon G_0 \to \{\pm1\}$, the graph $G_{\epsilon}$ is a disjoint union of Dynkin graphs of type $\mathbb{A}$, $\mathbb{D}$ and $\mathbb{E}$. 
\end{enumerate}
\item Assume that, for any $\epsilon \colon G_0\to \{\pm1\}$, the graph $G_{\epsilon}$ is a disjoint union of Dynkin graphs $\Delta_{\epsilon,\lambda}$ ($\lambda \in \Lambda_{\epsilon}$). Then for a fixed vertex $v$ of $G$, the number $\#\twotilt A$ is equal to 
\end{enumerate}
\begin{align}\label{number G}
2\cdot \sum_{\substack{\epsilon\colon G_0 \to \{\pm1\} \\ \epsilon(v)=+1}} |G_{\epsilon}| \ = \ 
2\cdot \sum_{\substack{\epsilon\colon G_0 \to \{\pm1\} \\ \epsilon(v)=+1}} \prod_{\lambda\in \Lambda_{\epsilon}} |\Delta_{\epsilon,\lambda}|. 
\end{align}
\end{corollary}
\begin{proof}
Let $Q:=(Q_A)^{\circ}$, where $Q_A$ is the Gabriel quiver of $A$. Then $Q=Q_G$ holds by Definition-Proposition \ref{graphA}. 
Then the assertion follows from Theorem \ref{reduced ver} since $G_{\epsilon}=Q_{\epsilon}$ for any map $\epsilon \colon G_0\to \{\pm1\}$.
\end{proof}

\begin{definition} \label{number GG}
Keeping the notations in Corollary \ref{number by graph}(2), we write $||G||$ for the number given by the left hand side of (\ref{number G}). 
\end{definition}

\begin{figure}[t]
\begin{center}
\scalebox{1}{
$$
\begin{xy}
(-17,0)*{Q:=Q_{\mathbb{E}_6}\colon}, (0, -1)*+{1}="E61", (14, -1)*+{2}="E62", (28, -1)*+{3}="E63", (42, -1)*+{5}="E64", (56, -1)*+{6}="E65", (28, 10)*+{4}="E66", 
{"E61" \ar@{->}@<1mm> "E62"},
{"E62" \ar@{->}@<1mm> "E63"},
{"E63" \ar@{->}@<1mm> "E64"},
{"E64" \ar@{->}@<1mm> "E65"},
{"E63" \ar@{->}@<1mm> "E66"},
{"E61" \ar@{<-}@<-1mm> "E62"},
{"E62" \ar@{<-}@<-1mm> "E63"},
{"E63" \ar@{<-}@<-1mm> "E64"},
{"E64" \ar@{<-}@<-1mm> "E65"},
{"E63" \ar@{<-}@<-1mm> "E66"},
\end{xy}
$$ 
}
\end{center}
\begin{center}
\scalebox{0.87}{
\vspace{5mm}
\begin{tabular}{|c||c|c|c|c|c|c|} \hline
$Q_{\epsilon}$   &
\scalebox{0.7}{$\begin{xy}
(0, 0)*+{1^+}="E61", (10, 0)*+{2^+}="E62", (20, 0)*+{3^+}="E63", (30, 0)*+{5^+}="E64", (40, 0)*+{6^+}="E65", (20, 10)*+{4^+}="E66", 
(20, -8)*+{(\mathbb{A}_1,\mathbb{A}_1,\mathbb{A}_1,\mathbb{A}_1,\mathbb{A}_1,\mathbb{A}_1)},
\end{xy}$}
&
\scalebox{0.7}{$\begin{xy}
(0, 0)*+{1^-}="E61", (10, 0)*+{2^+}="E62", (20, 0)*+{3^+}="E63", (30, 0)*+{5^+}="E64", (40, 0)*+{6^+}="E65", (20, 10)*+{4^+}="E66", 
{"E61" \ar@{<-} "E62"},
(20, -8)*+{(\mathbb{A}_2,\mathbb{A}_1,\mathbb{A}_1,\mathbb{A}_1,\mathbb{A}_1)},
\end{xy}$}
& 
\scalebox{0.7}{$\begin{xy}
(0, 0)*+{1^+}="E61", (10, 0)*+{2^-}="E62", (20, 0)*+{3^+}="E63", (30, 0)*+{5^+}="E64", (40, 0)*+{6^+}="E65", (20, 10)*+{4^+}="E66", 
{"E61" \ar@{->} "E62"},
{"E62" \ar@{<-} "E63"},
(20, -8)*+{(\mathbb{A}_3, \mathbb{A}_1,\mathbb{A}_1,\mathbb{A}_1)},
\end{xy}$}
&  
\scalebox{0.7}{$\begin{xy}
(0, 0)*+{1^-}="E61", (10, 0)*+{2^-}="E62", (20, 0)*+{3^+}="E63", (30, 0)*+{5^+}="E64", (40, 0)*+{6^+}="E65", (20, 10)*+{4^+}="E66", 
{"E62" \ar@{<-} "E63"},
(20, -8)*+{(\mathbb{A}_1,\mathbb{A}_2, \mathbb{A}_1,\mathbb{A}_1,\mathbb{A}_1)},
\end{xy}$}
\\ \hline
$\# \tilt \mathbf{k}Q_{\epsilon}$ & $1$& $2$ & $5$ & $2$  \\ 
\hline \hline 
&
\scalebox{0.7}{$\begin{xy}
(0, 0)*+{1^+}="E61", (10, 0)*+{2^+}="E62", (20, 0)*+{3^-}="E63", (30, 0)*+{5^+}="E64", (40, 0)*+{6^+}="E65", (20, 10)*+{4^+}="E66", 
{"E62" \ar@{->} "E63"},
{"E63" \ar@{<-} "E64"},
{"E63" \ar@{<-} "E66"},
(20, -8)*+{(\mathbb{A}_1,\mathbb{D}_4, \mathbb{A}_1)},
\end{xy}$}
&
\scalebox{0.7}{$\begin{xy}
(0, 0)*+{1^-}="E61", (10, 0)*+{2^+}="E62", (20, 0)*+{3^-}="E63", (30, 0)*+{5^+}="E64", (40, 0)*+{6^+}="E65", (20, 10)*+{4^+}="E66", 
{"E61" \ar@{<-} "E62"},
{"E62" \ar@{->} "E63"},
{"E63" \ar@{<-} "E64"},
{"E63" \ar@{<-} "E66"},
(20, -8)*+{(\mathbb{D}_5,\mathbb{A}_1)},
\end{xy}$}
& 
\scalebox{0.7}{$\begin{xy}
(0, 0)*+{1^+}="E61", (10, 0)*+{2^-}="E62", (20, 0)*+{3^-}="E63", (30, 0)*+{5^+}="E64", (40, 0)*+{6^+}="E65", (20, 10)*+{4^+}="E66", 
{"E61" \ar@{->} "E62"},
{"E63" \ar@{<-} "E64"},
{"E63" \ar@{<-} "E66"},
(20, -8)*+{(\mathbb{A}_2,\mathbb{A}_3,\mathbb{A}_1)},
\end{xy}$}
&  
\scalebox{0.7}{$\begin{xy}
(0, 0)*+{1^-}="E61", (10, 0)*+{2^-}="E62", (20, 0)*+{3^-}="E63", (30, 0)*+{5^+}="E64", (40, 0)*+{6^+}="E65", (20, 10)*+{4^+}="E66", 
{"E63" \ar@{<-} "E64"},
{"E63" \ar@{<-} "E66"},
(20, -8)*+{(\mathbb{A}_1,\mathbb{A}_1,\mathbb{A}_3,\mathbb{A}_1)},
\end{xy}$}
\\ \hline
& $20$ & $77$ & $10$ & $5$  \\ 
\hline \hline
&
\scalebox{0.7}{$\begin{xy}
(0, 0)*+{1^+}="E61", (10, 0)*+{2^+}="E62", (20, 0)*+{3^+}="E63", (30, 0)*+{5^+}="E64", (40, 0)*+{6^+}="E65", (20, 10)*+{4^-}="E66", 
{"E63" \ar@{->} "E66"},
(20, -8)*+{(\mathbb{A}_1,\mathbb{A}_1,\mathbb{A}_2, \mathbb{A}_1,\mathbb{A}_1)},
\end{xy}$}
&
\scalebox{0.7}{$\begin{xy}
(0, 0)*+{1^-}="E61", (10, 0)*+{2^+}="E62", (20, 0)*+{3^+}="E63", (30, 0)*+{5^+}="E64", (40, 0)*+{6^+}="E65", (20, 10)*+{4^-}="E66", 
{"E61" \ar@{<-} "E62"},
{"E63" \ar@{->} "E66"},
(20, -8)*+{(\mathbb{A}_2,\mathbb{A}_2,\mathbb{A}_1,\mathbb{A}_1)},
\end{xy}$}
& 
\scalebox{0.7}{$\begin{xy}
(0, 0)*+{1^+}="E61", (10, 0)*+{2^-}="E62", (20, 0)*+{3^+}="E63", (30, 0)*+{5^+}="E64", (40, 0)*+{6^+}="E65", (20, 10)*+{4^-}="E66", 
{"E61" \ar@{->} "E62"},
{"E62" \ar@{<-} "E63"},
{"E63" \ar@{->} "E66"},
(20, -8)*+{(\mathbb{A}_4, \mathbb{A}_1,\mathbb{A}_1)},
\end{xy}$}
&  
\scalebox{0.7}{$\begin{xy}
(0, 0)*+{1^-}="E61", (10, 0)*+{2^-}="E62", (20, 0)*+{3^+}="E63", (30, 0)*+{5^+}="E64", (40, 0)*+{6^+}="E65", (20, 10)*+{4^-}="E66", 
{"E62" \ar@{<-} "E63"},
{"E63" \ar@{->} "E66"},
(20, -8)*+{(\mathbb{A}_1,\mathbb{A}_3,\mathbb{A}_1,\mathbb{A}_1)},
\end{xy}$}
\\ \hline
& $2$ & $4$ & $14$ & $5$  \\
\hline \hline
&
\scalebox{0.7}{$\begin{xy}
(0, 0)*+{1^+}="E61", (10, 0)*+{2^+}="E62", (20, 0)*+{3^-}="E63", (30, 0)*+{5^+}="E64", (40, 0)*+{6^+}="E65", (20, 10)*+{4^-}="E66", 
{"E62" \ar@{->} "E63"},
{"E63" \ar@{<-} "E64"},
(20, -8)*+{(\mathbb{A}_1,\mathbb{A}_3,\mathbb{A}_1,\mathbb{A}_1)},
\end{xy}$}
&
\scalebox{0.7}{$\begin{xy}
(0, 0)*+{1^-}="E61", (10, 0)*+{2^+}="E62", (20, 0)*+{3^-}="E63", (30, 0)*+{5^+}="E64", (40, 0)*+{6^+}="E65", (20, 10)*+{4^-}="E66", 
{"E61" \ar@{<-} "E62"},
{"E62" \ar@{->} "E63"},
{"E63" \ar@{<-} "E64"},
(20, -8)*+{(\mathbb{A}_4, \mathbb{A}_1,\mathbb{A}_1)},
\end{xy}$}
& 
\scalebox{0.7}{$\begin{xy}
(0, 0)*+{1^+}="E61", (10, 0)*+{2^-}="E62", (20, 0)*+{3^-}="E63", (30, 0)*+{5^+}="E64", (40, 0)*+{6^+}="E65", (20, 10)*+{4^-}="E66", 
{"E61" \ar@{->} "E62"},
{"E63" \ar@{<-} "E64"},
(20, -8)*+{(\mathbb{A}_2,\mathbb{A}_2,\mathbb{A}_1,\mathbb{A}_1)},
\end{xy}$}
&  
\scalebox{0.7}{$\begin{xy}
(0, 0)*+{1^-}="E61", (10, 0)*+{2^-}="E62", (20, 0)*+{3^-}="E63", (30, 0)*+{5^+}="E64", (40, 0)*+{6^+}="E65", (20, 10)*+{4^-}="E66", 
{"E63" \ar@{<-} "E64"},
(20, -8)*+{(\mathbb{A}_1,\mathbb{A}_1,\mathbb{A}_2,\mathbb{A}_1,\mathbb{A}_1)},
\end{xy}$}
\\ \hline
& $5$ & $14$ & $4$ & $2$  \\
\hline \hline
&
\scalebox{0.7}{$\begin{xy}
(0, 0)*+{1^+}="E61", (10, 0)*+{2^+}="E62", (20, 0)*+{3^+}="E63", (30, 0)*+{5^-}="E64", (40, 0)*+{6^+}="E65", (20, 10)*+{4^+}="E66", 
{"E63" \ar@{->} "E64"},
{"E64" \ar@{<-} "E65"},
(20, -8)*+{(\mathbb{A}_1,\mathbb{A}_1,\mathbb{A}_1,\mathbb{A}_3)},
\end{xy}$}
&
\scalebox{0.7}{$\begin{xy}
(0, 0)*+{1^-}="E61", (10, 0)*+{2^+}="E62", (20, 0)*+{3^+}="E63", (30, 0)*+{5^-}="E64", (40, 0)*+{6^+}="E65", (20, 10)*+{4^+}="E66", 
{"E61" \ar@{<-} "E62"},
{"E63" \ar@{->} "E64"},
{"E64" \ar@{<-} "E65"},
(20, -8)*+{(\mathbb{A}_2,\mathbb{A}_1,\mathbb{A}_3)},
\end{xy}$}
& 
\scalebox{0.7}{$\begin{xy}
(0, 0)*+{1^+}="E61", (10, 0)*+{2^-}="E62", (20, 0)*+{3^+}="E63", (30, 0)*+{5^-}="E64", (40, 0)*+{6^+}="E65", (20, 10)*+{4^+}="E66", 
{"E61" \ar@{->} "E62"},
{"E62" \ar@{<-} "E63"},
{"E63" \ar@{->} "E64"},
{"E64" \ar@{<-} "E65"},
(20, -8)*+{(\mathbb{A}_5,\mathbb{A}_1)},
\end{xy}$}
&  
\scalebox{0.7}{$\begin{xy}
(0, 0)*+{1^-}="E61", (10, 0)*+{2^-}="E62", (20, 0)*+{3^+}="E63", (30, 0)*+{5^-}="E64", (40, 0)*+{6^+}="E65", (20, 10)*+{4^+}="E66", 
{"E62" \ar@{<-} "E63"},
{"E63" \ar@{->} "E64"},
{"E64" \ar@{<-} "E65"},
(20, -8)*+{(\mathbb{A}_1,\mathbb{A}_1, \mathbb{A}_4)},
\end{xy}$}
\\ \hline
& $5$& $10$ & $42$ & $14$  \\ 
\hline \hline 
&
\scalebox{0.7}{$\begin{xy}
(0, 0)*+{1^+}="E61", (10, 0)*+{2^+}="E62", (20, 0)*+{3^-}="E63", (30, 0)*+{5^-}="E64", (40, 0)*+{6^+}="E65", (20, 10)*+{4^+}="E66", 
{"E62" \ar@{->} "E63"},
{"E64" \ar@{<-} "E65"},
{"E63" \ar@{<-} "E66"},
(20, -8)*+{(\mathbb{A}_1\mathbb{A}_3, \mathbb{A}_2)},
\end{xy}$}
&
\scalebox{0.7}{$\begin{xy}
(0, 0)*+{1^-}="E61", (10, 0)*+{2^+}="E62", (20, 0)*+{3^-}="E63", (30, 0)*+{5^-}="E64", (40, 0)*+{6^+}="E65", (20, 10)*+{4^+}="E66", 
{"E61" \ar@{<-} "E62"},
{"E62" \ar@{->} "E63"},
{"E64" \ar@{<-} "E65"},
{"E63" \ar@{<-} "E66"},
(20, -8)*+{(\mathbb{A}_4, \mathbb{A}_2)},
\end{xy}$}
& 
\scalebox{0.7}{$\begin{xy}
(0, 0)*+{1^+}="E61", (10, 0)*+{2^-}="E62", (20, 0)*+{3^-}="E63", (30, 0)*+{5^-}="E64", (40, 0)*+{6^+}="E65", (20, 10)*+{4^+}="E66", 
{"E61" \ar@{->} "E62"},
{"E64" \ar@{<-} "E65"},
{"E63" \ar@{<-} "E66"},
(20, -8)*+{(\mathbb{A}_2,\mathbb{A}_2, \mathbb{A}_2)},
\end{xy}$}
&  
\scalebox{0.7}{$\begin{xy}
(0, 0)*+{1^-}="E61", (10, 0)*+{2^-}="E62", (20, 0)*+{3^-}="E63", (30, 0)*+{5^-}="E64", (40, 0)*+{6^+}="E65", (20, 10)*+{4^+}="E66", 
{"E64" \ar@{<-} "E65"},
{"E63" \ar@{<-} "E66"},
(20, -8)*+{(\mathbb{A}_1, \mathbb{A}_1, \mathbb{A}_2,\mathbb{A}_2)},
\end{xy}$}
\\ \hline
& $10$ & $28$ & $8$ & $4$  \\
\hline \hline
&
\scalebox{0.7}{$\begin{xy}
(0, 0)*+{1^+}="E61", (10, 0)*+{2^+}="E62", (20, 0)*+{3^+}="E63", (30, 0)*+{5^-}="E64", (40, 0)*+{6^+}="E65", (20, 10)*+{4^-}="E66", 
{"E63" \ar@{->} "E64"},
{"E64" \ar@{<-} "E65"},
{"E63" \ar@{->} "E66"},
(20, -8)*+{(\mathbb{A}_1,\mathbb{A}_1, \mathbb{A}_4)},
\end{xy}$}
&
\scalebox{0.7}{$\begin{xy}
(0, 0)*+{1^-}="E61", (10, 0)*+{2^+}="E62", (20, 0)*+{3^+}="E63", (30, 0)*+{5^-}="E64", (40, 0)*+{6^+}="E65", (20, 10)*+{4^-}="E66", 
{"E61" \ar@{<-} "E62"},
{"E63" \ar@{->} "E64"},
{"E64" \ar@{<-} "E65"},
{"E63" \ar@{->} "E66"},
(20, -8)*+{(\mathbb{A}_2, \mathbb{A}_4)},
\end{xy}$}
& 
\scalebox{0.7}{$\begin{xy}
(0, 0)*+{1^+}="E61", (10, 0)*+{2^-}="E62", (20, 0)*+{3^+}="E63", (30, 0)*+{5^-}="E64", (40, 0)*+{6^+}="E65", (20, 10)*+{4^-}="E66", 
{"E61" \ar@{->} "E62"},
{"E62" \ar@{<-} "E63"},
{"E63" \ar@{->} "E64"},
{"E64" \ar@{<-} "E65"},
{"E63" \ar@{->} "E66"},
(20, -8)*+{(\mathbb{E}_6)},
\end{xy}$}
&  
\scalebox{0.7}{$\begin{xy}
(0, 0)*+{1^-}="E61", (10, 0)*+{2^-}="E62", (20, 0)*+{3^+}="E63", (30, 0)*+{5^-}="E64", (40, 0)*+{6^+}="E65", (20, 10)*+{4^-}="E66", 
{"E62" \ar@{<-} "E63"},
{"E63" \ar@{->} "E64"},
{"E64" \ar@{<-} "E65"},
{"E63" \ar@{->} "E66"},
(20, -8)*+{(\mathbb{A}_1, \mathbb{D}_5)},
\end{xy}$}
\\ \hline
& $14$ & $28$ & $418$ & $77$  \\
\hline \hline
&
\scalebox{0.7}{$\begin{xy}
(0, 0)*+{1^+}="E61", (10, 0)*+{2^+}="E62", (20, 0)*+{3^-}="E63", (30, 0)*+{5^-}="E64", (40, 0)*+{6^+}="E65", (20, 10)*+{4^-}="E66", 
{"E62" \ar@{->} "E63"},
{"E64" \ar@{<-} "E65"},
(20, -8)*+{(\mathbb{A}_1, \mathbb{A}_2, \mathbb{A}_1, \mathbb{A}_2)},
\end{xy}$}
&
\scalebox{0.7}{$\begin{xy}
(0, 0)*+{1^-}="E61", (10, 0)*+{2^+}="E62", (20, 0)*+{3^-}="E63", (30, 0)*+{5^-}="E64", (40, 0)*+{6^+}="E65", (20, 10)*+{4^-}="E66", 
{"E61" \ar@{<-} "E62"},
{"E62" \ar@{->} "E63"},
{"E64" \ar@{<-} "E65"},
(20, -8)*+{(\mathbb{A}_3, \mathbb{A}_1, \mathbb{A}_2)},
\end{xy}$}
& 
\scalebox{0.7}{$\begin{xy}
(0, 0)*+{1^+}="E61", (10, 0)*+{2^-}="E62", (20, 0)*+{3^-}="E63", (30, 0)*+{5^-}="E64", (40, 0)*+{6^+}="E65", (20, 10)*+{4^-}="E66", 
{"E61" \ar@{->} "E62"},
{"E64" \ar@{<-} "E65"},
(20, -8)*+{(\mathbb{A}_2, \mathbb{A}_1,  \mathbb{A}_1,\mathbb{A}_2)},
\end{xy}$}
&  
\scalebox{0.7}{$\begin{xy}
(0, 0)*+{1^-}="E61", (10, 0)*+{2^-}="E62", (20, 0)*+{3^-}="E63", (30, 0)*+{5^-}="E64", (40, 0)*+{6^+}="E65", (20, 10)*+{4^-}="E66", 
{"E64" \ar@{<-} "E65"},
(20, -8)*+{(\mathbb{A}_1,\mathbb{A}_1,\mathbb{A}_1, \mathbb{A}_1, \mathbb{A}_2)},
\end{xy}$}\\ \hline
& $4$ & $10$ & $4$ & $2$  \\\hline \hline 
\end{tabular}
}
\end{center}
\caption{A half of single quivers of the double quiver of $\mathbb{E}_6$.}
\label{Fig.E6}
\end{figure}

\begin{example} \label{example-E6}
\begin{enumerate}[\rm (1)]
\item Let $Q$ be a quiver whose underlying graph $\Delta$ is one of Dynkin graphs of type $\mathbb{A}$, $\mathbb{D}$ and $\mathbb{E}$.
Let $A$ be the trivial extension of the path algebra $\mathbf{k}Q$ of $Q$ by a minimal co-generator.
It is easy to see that $A$ is a symmetric RCZ algebra if $Q$ is bipartite.
In this case, the Gabriel quiver of $A$ is precisely the double quiver $Q_{\Delta}$ of $\Delta$, in other words, the graph of $A$ is $\Delta$.
On the other hand, $Q^{\rm op}$ also determines the symmetric RCZ algebra, which is naturally isomorphic to $A$. 
\item Let $\Delta=\mathbb{E}_{6}$ and let $A$ be the symmetric RCZ algebra obtained as in (1).
In Figure \ref{Fig.E6}, we describe single quivers of $Q:=Q_{\mathbb{E}_6}$ associated to maps $\epsilon$ with $\epsilon(6)=+1$.
Here, the notation $i^{\sigma}$ denotes the vertex $i$ with $\epsilon(i)=\sigma \in \{\pm1\}$.
Using the Corollary \ref{number by graph}, we find that there are $1700$ isomorphism classes of basic two-term tilting complexes over $A$ as in the list of Theorem \ref{theorem2}.
\end{enumerate}
\end{example}

\section{Proof of Main Theorem} \label{main theorem}

In this section, we prove Theorems \ref{theorem1} and \ref{theorem2}.
Throughout this section, $G$ is a finite connected graph with no loops.

\subsection{Proof of Theorem \ref{theorem1}}

By Corollary \ref{number by graph}(1), the proof is completed with the following proposition.

\begin{proposition}\label{tothm1}
Let $G$ be a connected finite graph with no loops. Then the graph $G_{\epsilon}$ is a disjoint union of Dynkin graphs of type $\mathbb{A}$, $\mathbb{D}$ and $\mathbb{E}$ for every map $\epsilon \colon G_{0} \to\{\pm 1\}$ if and only if $G$ is one of the list in Theorem \ref{theorem1}. 
\end{proposition}

In the following, we give a proof of Proposition \ref{tothm1} by removing extended Dynkin graphs from the collection $G_{\epsilon}$ of subgraphs of $G$. 
We start with removing extended Dynkin graphs of type $\widetilde{\mathbb{A}}$.
A graph is called an \emph{$n$-cycle} if it is a cycle with exactly $n$ vertices.
In particular, it is called an \emph{odd-cycle} if $n$ is odd, and an \emph{even-cycle} if $n$ even.

\begin{lemma}\label{remove-ext-A}
The following statements are equivalent:
\begin{enumerate}[\rm (1)]
\item There exists a map $\epsilon \colon G_{0} \to\{\pm 1\}$ such that $G_{\epsilon}$ contains an extended Dynkin graph of type $\widetilde{\mathbb{A}}$ as a subgraph.
\item $G$ contains an even-cycle as a subgraph.
\end{enumerate}
\end{lemma}
\begin{proof}
(2)$\Rightarrow$(1): Let $G'$ be a subgraph of $G$ which is an even-cycle.
Since an even-cycle is a bipartite graph, there exists a map $\epsilon \colon G_{0} \to \{\pm 1\}$ such that the underlying graph of $G_{\epsilon}$ contains $G'$ as a subgraph.
Hence the assertion follows.

(1)$\Rightarrow$(2): Assume that for some map $\epsilon\colon G_{0}\to \{ \pm 1\}$, the graph $G_{\epsilon}$ contains an extended Dynkin graph $G'$ of type $\widetilde{\mathbb{A}}$. Since $G_{\epsilon}$ is bipartite, so is $G'$.
Hence $G'$ is an even-cycle and a subgraph of $G$.
This finishes the proof. 
\end{proof}

By Lemma \ref{remove-ext-A}, we may assume that $G$ contains no even-cycle as a subgraph.
In particular, $G$ has no multiple edges.
We give a connection between our graphs $G_{\epsilon}$ and subtrees of $G$.
Recall that a \emph{subtree} of $G$ is a connected subgraph of $G$ without cycles.

\begin{proposition}\label{subtree-bipartite}
Assume that $G$ contains no even-cycle as a subgraph. 
Let $G'$ be a connected graph.
Then the following statements are equivalent.
\begin{enumerate}[\rm (1)]
\item There exists a map $\epsilon \colon G_{0} \to\{ \pm 1\}$ such that $G_{\epsilon}$ contains $G'$ as a subgraph.
\item $G'$ is a subtree of $G$.
\end{enumerate}

In particular, there exists a naturally two-to-one correspondence between the set of connected graphs of the form $G_{\epsilon}$ and the set of subtrees of $G$.
\end{proposition}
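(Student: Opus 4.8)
The plan is to translate the whole statement into the language of the \emph{cut} determined by a sign map, and the crucial preliminary observation is an identification of $\overline{Q_\epsilon}$. Since $Q_G$ contains both arrows $i\to j$ and $j\to i$ for every edge $i-j$ of $G$, an edge $i-j$ survives in the underlying graph $\overline{Q_\epsilon}$ precisely when $\epsilon(i)\neq\epsilon(j)$: if the two signs agree, neither arrow satisfies the source-to-sink condition defining $Q_\epsilon$, whereas if they differ, exactly one does. Hence $\overline{Q_\epsilon}$ is exactly the subgraph of $G$ spanned by the edges crossing the partition $V=\epsilon^{-1}(+)\sqcup\epsilon^{-1}(-)$; in particular $\overline{Q_\epsilon}$ is always a subgraph of $G$. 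I would record this identification first, since both implications rest on it.

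For (2)$\Rightarrow$(1), I would use that a subtree $G'$ of $G$ is in particular a tree, hence admits a proper $2$-coloring $V(G')\to\{\pm\}$. Taking $\epsilon$ to be this coloring on $V(G')$ and extending it arbitrarily over the remaining vertices, every edge of $G'$ crosses the cut, so $G'\subseteq\overline{Q_\epsilon}$ by the preliminary observation. For (1)$\Rightarrow$(2), I would argue by contradiction on the presence of a cycle. Since $\overline{Q_\epsilon}\subseteq G$, the connected graph $G'$ is automatically a connected subgraph of $G$, so it only remains to see that $G'$ is acyclic. Any cycle in $G'$ lies in $\overline{Q_\epsilon}$ and therefore consists entirely of edges crossing the cut; walking around such a cycle the sign alternates, forcing its length to be even. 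As $G$ contains no even cycle by hypothesis, $G'$ has no cycle and is thus a subtree of $G$.

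I expect this cycle-parity step to be the only genuinely substantive point: it is exactly where the standing no-even-cycle assumption does its work, converting ``connected subgraph of a bipartite object inside $G$'' into ``tree''. Everything else is bookkeeping about the cut.

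Finally, for the two-to-one statement I would send a connected bipartite subquiver $P$ of $Q_G$ to its underlying graph $\overline{P}$. Being connected and bipartite, $\overline{P}$ is a connected bipartite subgraph of $G$ (the source/sink partition of $P$ gives a proper $2$-coloring), hence, again by the no-even-cycle hypothesis that forbids the only cycles a bipartite graph could carry, a subtree of $G$. Conversely, to count the preimages of a fixed subtree $T$ with at least one edge, I note that a bipartite subquiver with underlying graph $T$ must orient each edge from source to sink and cannot use both arrows over a single edge (that would make a vertex both a source and a sink); since $T$ is a tree its proper $2$-coloring is unique up to the global swap, giving exactly two admissible orientations, both of which are genuine subquivers of $Q_G$. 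Hence every such subtree has exactly two preimages, yielding the claimed correspondence. The point to handle carefully here is the uniqueness of the bipartition, which is precisely where connectedness of $T$ is used, together with the minor caveat that single-vertex subtrees must be excluded (or treated separately) for the count to be exactly two.
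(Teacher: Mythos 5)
Your proof is correct and follows essentially the same route as the paper: the paper's (1)$\Rightarrow$(2) simply cites Lemma \ref{remove-ext-A} to conclude that $\overline{Q_\epsilon}$ is a forest, which is exactly your cycle-parity argument (a cycle in the bipartite graph $\overline{Q_\epsilon}\subseteq G$ would be even), and its (2)$\Rightarrow$(1) is the same $2$-coloring observation you spell out. Your explicit identification of $\overline{Q_\epsilon}$ with the cut subgraph, and your caveat that single-vertex subtrees have only one preimage under the ``two-to-one'' correspondence, are details the paper leaves implicit, but they do not change the argument.
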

\begin{proof}
(2)$\Rightarrow$(1) is clear.
We show (1)$\Rightarrow$(2).
Since $G$ has no even-cycle as a subgraph, then $G_{\epsilon}$ is tree by Lemma \ref{remove-ext-A}.
Since $G'$ is a subgraph of $G$, any subgraph of $G'$ is a subtree of $G$. 
\end{proof}

For a tree, we have the following result.
\begin{corollary}\label{Dynkin-case}
Assume $G$ is a tree.
Then the graph $G_{\epsilon}$ is a disjoint union of Dynkin graphs of type $\mathbb{A}$, $\mathbb{D}$ and $\mathbb{E}$ for each map $\epsilon \colon G_{0} \to \{\pm 1\}$ if and only if $G$ is a Dynkin graph of type $\mathbb{A}$, $\mathbb{D}$ and $\mathbb{E}$.
\end{corollary}
\begin{proof}
It is well known that $G$ is a Dynkin graph if and only if all subtrees of $G$ are Dynkin graphs.
The assertion follows from Proposition \ref{subtree-bipartite}. 
\end{proof}

We remove extended Dynkin graphs of type $\widetilde{\mathbb{D}}$.
Assume that $G$ contains at least two odd-cycles.
Then there exists a subtree $G'$ of $G$ such that $G'$ is an extended Dynkin graph of type $\widetilde{\mathbb{D}}$.
Moreover, by Proposition \ref{subtree-bipartite}, there exists a map $\epsilon \colon G_{0} \to \{\pm 1\}$ such that $G_\epsilon$ contains an extended Dynkin graph of type $\widetilde{\mathbb{D}}$ as a subgraph. 
Hence we may assume that $G$ contains at most one odd-cycle.
By Corollary \ref{Dynkin-case}, it is enough to consider the case where $G$ contains exactly one odd-cycle.
Namely, $G$ consists of an odd-cycle such that each vertex $v$ in the odd-cycle is attached to a tree $T_{v}$.
\begin{align}
\xymatrix@C=4mm@R=3mm{
\bullet\ar@{-}[r]&\bullet\ar@{-}[r]&v_{1}\ar@{-}[dr]\ar@{-}[dl]\ar@{-}[r]&\bullet&\bullet\ar@{-}[d]&\\
&v_{2}\ar@{-}[rr]&&v_{3}\ar@{-}[r]&\bullet\ar@{-}[r]&\bullet\\
}\notag
\end{align}

\begin{lemma}\label{remove-ext-D}
Fix an integer $k\ge 1$ and $n:=2k+1$.
Assume that $G$ consists of an $n$-cycle such that each vertex $v$ in the $n$-cycle is attached to a tree $T_{v}$.
Then the following statements are equivalent:
\begin{enumerate}[\rm (1)]
\item There exists a map $\epsilon \colon G_{0} \to\{ \pm 1\}$ such that $G_{\epsilon}$ contains an extended Dynkin graph of type $\widetilde{\mathbb{D}}$ as a subgraph.
\item $G$ contains an extended Dynkin graph of type $\widetilde{\mathbb{D}}$ as a subgraph. 
\item $G$ satisfies one of the following conditions.
\begin{enumerate}[\rm (a)]
\item There is a vertex $v$ in the $n$-cycle such that the degree is at least four.
\item There is a vertex $v$ in the $n$-cycle such that the degree is exactly three and $T_{v}$ is not a Dynkin graph of type $\mathbb{A}$. 
\item $k\ge 2$ and there are at least two vertices in the $n$-cycle such that the degrees are at least three.
\end{enumerate}
\end{enumerate}
\end{lemma}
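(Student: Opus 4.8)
The plan is to separate the statement into the formal equivalence (1)$\Leftrightarrow$(2), which is immediate from the earlier dictionary, and the genuinely combinatorial core (2)$\Leftrightarrow$(3). For (1)$\Leftrightarrow$(2), note that the underlying graph of any extended Dynkin quiver of type $\tilde{\mathbb{D}}$ is the tree $\tilde{\mathbb{D}}_m$, and that $Q_{\epsilon}$ is bipartite, so $Q_{\epsilon}$ contains such a quiver as a subquiver exactly when $\overline{Q_{\epsilon}}$ contains $\tilde{\mathbb{D}}_m$ as a subgraph (the bipartite orientation does not affect the type). Since $\tilde{\mathbb{D}}_m$ is connected and acyclic, Proposition \ref{subtree-bipartite} applied with $G'=\tilde{\mathbb{D}}_m$ shows that some $\epsilon$ achieves this if and only if $\tilde{\mathbb{D}}_m$ is a subtree of $G$, i.e.\ if and only if (2) holds. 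After this, all the work is in (2)$\Leftrightarrow$(3).

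The guiding observation I would use is that, because $\tilde{\mathbb{D}}_m$ is a tree, any $\tilde{\mathbb{D}}$-subgraph of $G$ is a subtree, and a tree contains $\tilde{\mathbb{D}}$ precisely when it has a vertex of degree $\ge 4$ (yielding $\tilde{\mathbb{D}}_4=K_{1,4}$) or two vertices of degree $\ge 3$ (joined by a path, each supplying two leaves, yielding $\tilde{\mathbb{D}}_m$ with $m\ge 5$). The only way $G$ can fail to realize the second pattern, despite having two vertices of degree $\ge 3$, is a collision of the required leaves on the unique odd cycle, and this is exactly what the cycle-length clause in (c) governs. For (3)$\Rightarrow$(2) I would build $\tilde{\mathbb{D}}$ explicitly in each case: for (a), take the star on a degree-$\ge 4$ vertex and four of its neighbours; for (b), take $v$ together with a degree-$\ge 3$ branch vertex $w$ of $T_v$, using the path from $v$ to $w$ inside $T_v$ as spine, the two cycle-neighbours of $v$ as $v$'s leaves, and two tree-neighbours of $w$ off the spine as $w$'s leaves; for (c), take two degree-$\ge 3$ cycle vertices $v_1,v_2$, use the shorter arc of the $n$-cycle as spine, and give each $v_i$ one tree-neighbour and its neighbour on the longer arc as leaves. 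The hypothesis $k>1$, i.e.\ $n\ge 5$, forces the longer arc to have length $\ge 3$, which is precisely what makes the two arc-leaves distinct; this is where $k>1$ is used.

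For (2)$\Rightarrow$(3) I would argue by contraposition, assuming none of (a),(b),(c) holds, and first read off the resulting shape of $G$. Since no cycle vertex has degree $\ge 4$ (failure of (a)) and each degree-$3$ cycle vertex carries a type-$\mathbb{A}$ tree (failure of (b)), every $T_v$ is either trivial or a pendant path; consequently $G$ has \emph{no} vertex of degree $\ge 4$ anywhere, and its only vertices of degree $\ge 3$ are the degree-$3$ attachment points on the cycle. A $\tilde{\mathbb{D}}$-subgraph would therefore need two such attachment points as its two branch vertices. The failure of (c) leaves two possibilities: either there is at most one attachment point of degree $\ge 3$, so two branch vertices are simply unavailable; or $n=3$. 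In the remaining case $n=3$ I would carry out the explicit leaf-collision check on the triangle-with-whiskers: for any two attachment vertices $v_1,v_2$ and for either candidate spine (the edge $v_1v_2$ or the path $v_1 v_3 v_2$), the shared third vertex $v_3$ cannot serve as a leaf at both ends without closing the triangle, so the four required distinct leaves do not exist and no $\tilde{\mathbb{D}}$ embeds, giving the contrapositive.

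The hard part, and the main obstacle, is the forward implication (2)$\Rightarrow$(3): it requires organizing the leaf-collision bookkeeping so that the threshold $n\ge 5$ in (c) and the $n=3$ exception emerge cleanly, and it rests on the structural claim that the negations of (a) and (b) prevent any vertex of degree $\ge 4$, or any additional vertex of degree $\ge 3$, from hiding inside the trees $T_v$. Once that structural reduction is secured, the constructions for (3)$\Rightarrow$(2) and the small triangle computation are routine.
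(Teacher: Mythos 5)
Your proposal is correct and follows essentially the same route as the paper: (1)$\Leftrightarrow$(2) via Proposition \ref{subtree-bipartite} (using that $\tilde{\mathbb{D}}_m$ is a tree), and (2)$\Leftrightarrow$(3) via the degree structure of $\tilde{\mathbb{D}}_4$ (one vertex of degree four) versus $\tilde{\mathbb{D}}_m$, $m\ge 5$ (two vertices of degree three). The paper states this second equivalence as an easy check, whereas you supply the explicit constructions and the leaf-collision analysis for $n=3$; these details are sound.
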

\begin{proof}
(1)$\Leftrightarrow$(2) follows from Proposition \ref{subtree-bipartite}.
Moreover, we can easily check (2)$\Leftrightarrow$(3) because $\widetilde{\mathbb{D}}_{4}$ has exactly one vertex whose degree is exactly four and $\widetilde{\mathbb{D}}_{l}$ ($l\ge 5$) has exactly two vertices whose degree are exactly three.
\end{proof}

Fix an integer $k \ge 1$ and $n:=2k+1$.
By Lemma \ref{remove-ext-D}, we may assume that $G$ is one of the following graphs:

\begin{align}
\begin{picture}(400,200)(0,0)
\put(95,5){$k=1$}
\put(290,5){$k\ge 2$}
\put(70,200){\xymatrix@C=3mm@R=3mm{
&1_{l_{1}}\ar@{-}[d]&&\\
&\vdots\ar@{-}[d]&&\\
&1_{1}\ar@{-}[d]&&\\
&1\ar@{-}[rd]\ar@{-}[ld]&&\\
2\ar@{-}[rr]\ar@{-}[d]&&3\ar@{-}[d]\\
2_{1}\ar@{-}[d]&&3_{1}\ar@{-}[d]\\
\vdots\ar@{-}[d]&&\vdots\ar@{-}[d]\\
2_{l_{2}}&&3_{l_{3}}
}}
\put(270,170){\xymatrix@C=4mm@R=4mm{
&1_{l_{1}}\ar@{-}[d]&\\
&\vdots\ar@{-}[d]&\\
&1_{1}\ar@{-}[d]&\\
&1\ar@{-}[rd]\ar@{-}[ld]&\\
2\ar@{-}[d]&&n\ar@{-}[d]\\
3\ar@{.}[rr]&&{n-1}
}}
\end{picture}\notag
\end{align}

Finally, we remove extended Dynkin graphs of type $\widetilde{\mathbb{E}}$. 

\begin{lemma}\label{remove-ext-E}
Fix an integer $k \ge 1$ and $n:=2k+1$. 
\begin{enumerate}[\rm (1)]
\item Assume that $k=1$. 
The following graphs $(\mathrm{i})$, $(\mathrm{ii})$ and $(\mathrm{iii})$ are the minimal graphs containing an extended Dynkin graph of type $\widetilde{\mathbb{E}}$.
\begin{align}
\begin{picture}(400,160)(0,0)
\put(25,150){\textnormal{(i)}}
\put(30,150){\xymatrix@C=3mm@R=3mm{
&1_{1}\ar@{-}[d]&\\
&1\ar@{-}[rd]\ar@{-}[ld]&\\
2\ar@{-}[d]\ar@{-}[rr]&&3\ar@{-}[d]\\
2_{1}&&3_{1}\ar@{-}[d]\\
&&3_{2}
}}
\put(145,150){\textnormal{(ii)}}
\put(265,150){\textnormal{(iii)}}
\put(150,150){\xymatrix@C=3mm@R=3mm{
&1\ar@{-}[rd]\ar@{-}[ld]&\\
2\ar@{-}[rr]\ar@{-}[d]&&3\ar@{-}[d]\\
2_{1}\ar@{-}[d]&&3_{1}\ar@{-}[d]\\
2_{2}&&3_{2}\ar@{-}[d]\\
&&3_{3}
}}
\put(270,150){\xymatrix@C=3mm@R=3mm{
&1\ar@{-}[rd]\ar@{-}[ld]&\\
2\ar@{-}[rr]\ar@{-}[d]&&3\ar@{-}[d]\\
2_{1}&&3_{1}\ar@{-}[d]\\
&&3_{2}\ar@{-}[d]\\
&&3_{3}\ar@{-}[d]\\
&&3_{4}\ar@{-}[d]\\
&&3_{5}
}}
\end{picture}\notag
\end{align}
\item Assume that $k\ge 2$.
The following graphs $(\mathrm{iv})$ and $(\mathrm{v})$ are 
the minimal graphs containing an extended Dynkin graph of type $\widetilde{\mathbb{E}}$.
\begin{align}
\begin{picture}(300,145)(0,0)
\put(60,5){\textnormal{(iv)} $k= 2$}
\put(210,5){\textnormal{(v)} $k\ge 3$}
\put(45,130){\xymatrix@C=4mm@R=4mm{
&1_{2}\ar@{-}[d]&\\
&1_{1}\ar@{-}[d]&\\
&1\ar@{-}[rd]\ar@{-}[ld]&\\
2\ar@{-}[d]&&n\ar@{-}[d]\\
3\ar@{.}[rr]&&{n-1}
}}
\put(195,130){\xymatrix@C=4mm@R=4mm{
&1_{1}\ar@{-}[d]&\\
&1\ar@{-}[rd]\ar@{-}[ld]&\\
2\ar@{-}[d]&&n\ar@{-}[d]\\
3\ar@{-}[d]&&n-1\ar@{-}[d]\\
4\ar@{.}[rr]&&n-2
}}
\end{picture}\notag
\end{align}
\end{enumerate}
\end{lemma}
\begin{proof}
We can easily find extended Dynkin graphs $\widetilde{\mathbb{E}}_{6}$, $\widetilde{\mathbb{E}}_{7}$ and $\widetilde{\mathbb{E}}_{8}$ in the graphs above. 
\end{proof}

Now we are ready to prove Proposition \ref{tothm1}.

\begin{proof}[Proof of Proposition \ref{tothm1}]
If $G$ is a tree, then the assertion follows from Corollary \ref{Dynkin-case}.
We assume that $G$ is not a tree. 
By Lemma \ref{remove-ext-A}, we may assume that $G$ does not contain even-cycles as subgraphs.
Then $G$ does not contain extended Dynkin graphs as subgraphs if and only if $G$ is one of the following classes:
\begin{itemize}
\item $(\mathrm{I}_{n})_{n\ge 4}$ in Theorem \ref{theorem1}(2), 
\item proper connected non-tree subgraphs appearing in Lemma \ref{remove-ext-E}(i)--(v).
\end{itemize}
The second class coincides with the graphs $(\widetilde{\mathbb{A}}_{n-1})_{n:{\rm odd}}$, $(\mathrm{I}_{n})_{4\le n \le 8}$, $(\mathrm{II}_{n})_{5\le n\le 8}$, (III), (IV) and (V) in Theorem \ref{theorem1}(2).
Hence the assertion follows from Proposition \ref{subtree-bipartite}.
\end{proof}

We finish this subsection with proof of Theorem \ref{theorem1}.

\begin{proof}[Proof of Theorem \ref{theorem1}] 
The result follows from Corollary \ref{number by graph}(1) and Proposition \ref{tothm1}.
\end{proof}

\subsection{Proof of Theorem 1.2}
We just compute the number of two-term tilting complexes for each graph in the list of Theorem \ref{theorem1}. Our calculation is based on Theorem \ref{reduced ver} and Corollary \ref{number by graph}. For our purpose, we assume that $G$ is a graph appearing in the list of Theorem \ref{theorem1} and let $A$ be a basic connected finite dimensional symmetric RCZ algebra whose graph is $G$. 

Keeping above notations, we determine the number $\#\twotilt A$, or equivalently, $||G||$ in Definition \ref{number GG}.  
First, for types $\mathbb{A}$ and $\widetilde{\mathbb{A}}$, the number is already computed by \cite{Aoki18}: 
 
\begin{proposition} \cite[Theorem 1.2]{Aoki18} \label{type A}
The following equality holds.
\begin{align}
\# \twotilt A = 
\begin{cases}
\binom{2n}{n} &\text{if $G=\mathbb{A}_{n}$,}\\
2^{2n-1}  &\text{if $G=\widetilde{\mathbb{A}}_{n-1}$ for odd $n$.}
\end{cases}\notag
\end{align}
\end{proposition}

Secondly, we consider the case where $G$ is a Dynkin graph of type $\mathbb{D}$.
For simplicity, let $c_{0}=1$, $c_{l}:=\binom{2l}{l}$ for each $l\ge 1$. 
Then we have $||\mathbb{A}_l|| = c_l$ for all $l\geq 1$ by Proposition \ref{type A}. In addition, let $||\mathbb{A}_0||:=2$.   

\begin{proposition}\label{type D}
Let $n\ge 4$ and $G=\mathbb{D}_n$. Then we have
\begin{align}
\# \twotilt A = 6\cdot 4^{n-2} - 2 c_{n-2}.\notag
\end{align}
\end{proposition}
\begin{proof}
Let $G$ be a graph as follows.
\begin{align}
\xymatrix@R=1mm{
1&&&&&&&\\
&3\ar@{-}[lu]\ar@{-}[ld]\ar@{-}[r]&4\ar@{-}[r]&\cdots\ar@{-}[r]&n.\\
2&&&&&&&
}\notag
\end{align}
By Corollary \ref{number by graph}, we have
\begin{equation} \label{for D}
\# \twotilt A =2 \cdot \sum_{\substack{\epsilon\colon Q_{0} \rightarrow \{ \pm 1\} \\ \epsilon(3)=+1}} |{G}_{\epsilon}|.
\end{equation}
We study the right hand side of (\ref{for D}). Let $M$ be the set of maps $\epsilon \colon G_{0} \rightarrow \{\pm1\}$ such that $\epsilon(3)=+1$.  
Clearly, $M$ is a disjoint union of the following subsets: 
\begin{itemize}
\item $M_{1}:=\{ \epsilon\in M \mid \epsilon(1)=\epsilon(2)=\epsilon(3) \}$.
\item $M_{2}:=\{ \epsilon\in M \mid \epsilon(1)=-\epsilon(2)=\epsilon(3) \}$.
\item $M_{3}:=\{ \epsilon\in M \mid -\epsilon(1)=\epsilon(2)=\epsilon(3) \}$.
\item $M_{4}:=\{ \epsilon\in M \mid -\epsilon(1)=-\epsilon(2)=\epsilon(3)=\epsilon(4) \}$.
\item $M_{5}:=\{ \epsilon\in M \mid -\epsilon(1)=-\epsilon(2)=\epsilon(3)=-\epsilon(4) \}=\bigsqcup_{t=4}^{n}M_{5}(t)$, where
\begin{align}
M_{5}(t):=\Bigl\{ \epsilon\in M_{5}\ \Bigl.\Bigr|\ t=\min\{ 4 \le j \le n \mid \epsilon(j)=\epsilon(j+1)\} \Bigr\}. \notag
\end{align}
\end{itemize}
From now, we compute $\mathsf{n}(i):=\sum_{\epsilon \in M_i} |G_{\epsilon}|$ for each $i\in\{ 1,\ldots, 5\}$.
In the following, the notation $\xymatrix{i\ar@{~}[r]&j}$ is replaced by an edge connecting $i$ and $j$ if $\epsilon(i)\neq\epsilon(j)$, otherwise nothing between them.

(i) Let $\epsilon \in M_{1}$. Then $G_{\epsilon}$ is given by 
\begin{align}
\xymatrix@R=1mm{
1&&&&&\\
&3\ar@{~}[r]&4\ar@{~}[r]&\cdots\ar@{~}[r]&n-1\ar@{~}[r]&n.\\
2&&&&&
}\notag
\end{align}
Let $G'$ be the subgraph of $G$ obtained by removing the vertices $\{1,2\}$. Then we have $|G_{\epsilon}| = |G'_{\epsilon|_{\{3,\ldots,n\}}}|$. 
Since $G'$ is a Dynkin graph of type $\mathbb{A}_{n-2}$, we obtain 
\[
  2\mathsf{n}(1)= 2 \cdot \sum_{\substack{\epsilon\colon G_0' \to \{\pm1\} \\ \epsilon(3)=+1}} |G'_{\epsilon}| =||\mathbb{A}_{n-2}|| = c_{n-2}
\] 
where the last equality follows from Proposition \ref{type A}.

By an argument similar to (1), we can calculate other cases.

(ii) For each $\epsilon\in M_{2}$, the graph $G_{\epsilon}$ is given by
\begin{align}
\xymatrix@R=1mm{
1&&&&&\\
&3\ar@{-}[ld]\ar@{~}[r]&4\ar@{~}[r]&\cdots\ar@{~}[r]&n-1\ar@{~}[r]&n.\\
2&&&&&
}\notag
\end{align}
Then we can check $2\mathsf{n}(2)=||\mathbb{A}_{n-1}||-||\mathbb{A}_{n-2}||=c_{n-1}-c_{n-2}$.

(iii) By the symmetry of $G$, we have $\mathsf{n}(3)=\mathsf{n}(2)$.

(iv) Let $\epsilon\in M_{4}$. Then $G_{\epsilon}$ is described as 
\begin{align}
\xymatrix@R=1mm{
1&&&&&\\
&3\ar@{-}[lu]\ar@{-}[ld]&4\ar@{~}[r]&\cdots\ar@{~}[r]&n-1\ar@{~}[r]&n.\\
2&&&&&
}\notag
\end{align}
Thus we find that $2\mathsf{n}(4)= |\mathbb{A}_3| \cdot ||\mathbb{A}_{n-3}|| = 5c_{n-3}$.

(v) For $\epsilon\in M_{5}(t)$, the graph $G_{\epsilon}$ is given by
\begin{align}
\xymatrix@R=1mm{
1&&&&&&&\\
&3\ar@{-}[lu]\ar@{-}[ld]\ar@{-}[r]&4& \ar@{-}[l]\cdots\ar@{-}[r]&t&t+1\ar@{~}[r]&\cdots\ar@{~}[r]&n.\\
2&&&&&&&
}\notag
\end{align}
Then we obtain 
\begin{align}
2\mathsf{n}(5)
&=\sum_{t=4}^{n}|\mathbb{D}_t| \cdot ||\mathbb{A}_{n-t}||
=\frac{3n-4}{2n}c_{n-1}\cdot 2c_{0}+\sum_{t=4}^{n-1}\frac{3t-4}{2t}c_{t-1}c_{n-t}\notag\\
&=\frac{3n-4}{2n}c_{n-1}+\sum_{t=4}^{n}\frac{3t-4}{2t}c_{t-1}c_{n-t}.\notag
\end{align}

To finish the proof, we need the following lemma.
\begin{lemma}  \label{binom numbers}
For any positive integer $n$, the following equalities hold:
\begin{enumerate}[(1)]
\item $\displaystyle{\sum_{t=1}^{n}c_{t-1}c_{n-t}=4^{n-1}}$.
\item $\displaystyle{\sum_{t=1}^{n}\frac{1}{t}c_{t-1}c_{n-t}=\frac{1}{2}c_{n}}$.
\end{enumerate}
\end{lemma}
\begin{proof}
The equality (1) is well-known. 
The equality (2) is obtained by
\begin{align}
\sum_{t=1}^{n}\frac{1}{t}c_{t-1}c_{n-t}
=\frac{n+1}{2}\sum_{t=1}^{n}C_{t-1}C_{n-t}
=\frac{n+1}{2}C_{n}=\frac{1}{2}c_{n},\notag
\end{align}
where $C_{n} := \frac{1}{n+1}c_{n}$ is the $n$-th Catalan number.
\end{proof}

By Lemma \ref{binom numbers}, we obtain the equality
\begin{align}
\sum_{t=4}^{n}\frac{3t-4}{2t}c_{t-1}c_{n-t}
&=\frac{3}{2}\sum_{t=4}^{n} c_{t-1}c_{n-t}-2\sum_{t=4}^{n}\frac{1}{t}c_{t-1}c_{n-t}\notag\\
&=\frac{3}{2}(4^{n-1} -c_{n-1} -2 c_{n-2} -6 c_{n-3}) -2(\frac{1}{2}c_{n}-c_{n-1}-c_{n-2}-2c_{n-3})\notag\\
&= 6\cdot 4^{n-2}-c_{n}+\frac{1}{2}c_{n-1}-c_{n-2}-5c_{n-3}.\notag
\end{align}
By (i)--(v), we have 
\begin{align}
\# \twotilt A 
&=c_{n-2}+2(c_{n-1}-c_{n-2})+5c_{n-3}+6\cdot 4^{n-2}-c_{n}+\frac{2n-2}{n}c_{n-1}-c_{n-2}-5c_{n-3}\notag\\
&=6\cdot 4^{n-2}-c_{n}+\frac{4n-2}{n}c_{n-1}-2c_{n-2}\notag\\
&=6\cdot 4^{n-2}-2c_{n-2},\notag
\end{align}
where the last equality follows from $c_{n}=\frac{2(2n-1)}{n}c_{n-1}$.
\end{proof}

Thirdly, we give an enumeration for type ($\mathrm{I}$). The number is obtained by using the result on type $\mathbb{D}$.

\begin{proposition} \label{type I}
If $G=\mathrm{I}_{n}$, then we have 
\begin{align}
\#\twotilt A = 6\cdot 4^{n-2} + 2c_{n} -4c_{n-1}-4c_{n-2}.\notag
\end{align}
\end{proposition}
\begin{proof}
Let $G$ be a graph as follows.
\begin{align}
\xymatrix@R=1mm{
1\ar@{-}[dd]&&&&&&&\\
&3\ar@{-}[lu]\ar@{-}[ld]\ar@{-}[r]&4\ar@{-}[r]&\cdots\ar@{-}[r]&n.\\
2&&&&&&&
}\notag
\end{align}
By using a similar method of the proof of proposition \ref{type D}, we calculate the right-hand side of
\begin{align}
\# \twotilt A = 2 \sum_{\substack{\epsilon\colon G_{0} \rightarrow \{ \pm 1\} \\ \epsilon(3)=+1}} |G_{\epsilon}|.\notag
\end{align}
Let $M$ and $M_{i}$ $(1\leq i \leq 5)$ be sets of maps given in the proof of Proposition \ref{type D} and $\mathsf{m}(i):=\sum_{\epsilon\in M_{i}} |G_{\epsilon}|$.
For each map $\epsilon\in M_{1}\sqcup M_{4}\sqcup M_{5}$, we have $G_{\epsilon}=(\mathbb{D}_n)_{\epsilon}$.
Hence for each $i\in \{1,4,5\}$, we have
\begin{align}
\mathsf{m}(i)=\sum_{\epsilon\in M_{i}} |G_{\epsilon}|=\sum_{\epsilon\in M_{i}} |(\mathbb{D}_n)_{\epsilon}| = \mathsf{n}(i).\notag
\end{align}
Since $\mathsf{m}(2)=\mathsf{m}(3)$ holds by the symmetry of $G$, we have only to calculate $\mathsf{m}(2)$.
For each map $\epsilon\in M_{2}$, the graph $G_{\epsilon}$ is given by 
\begin{align}
\xymatrix@R=1mm{
1\ar@{-}[dd]&&&&&\\
&3\ar@{-}[ld]\ar@{~}[r]&4\ar@{~}[r]&\cdots\ar@{~}[r]&n-1\ar@{~}[r]&n.\\
2&&&&&
}\notag
\end{align}
Then the calculation of $\mathsf{m}(2)$ is reduced to that of Dynkin graphs of type $\mathbb{A}$.
In fact, let $G'$ be the Dynkin graph $\mathbb{A}_{n}$.
Then we have 
\begin{align}
\mathsf{m}(2)
&= \sum_{\substack{\epsilon\colon G'_{0}\to \{ \pm 1\}\\ \epsilon(3)=+1}} |G'_{\epsilon}|- \sum_{\substack{\epsilon\colon G'_{0}\to \{ \pm 1\}\\ \epsilon(2)=\epsilon(3)=+1}} |G'_{\epsilon}| - 
\sum_{\substack{\epsilon\colon G'_{0}\to \{ \pm 1\} \\ -\epsilon(1)=-\epsilon(2)=\epsilon(3)=+1}} |G'_{\epsilon}|\notag \\
&=\frac{1}{2}||\mathbb{A}_{n}|| - \frac{1}{4} ||\mathbb{A}_{2}|| \cdot ||\mathbb{A}_{n-2}|| - \mathsf{n}(2). \notag\\
&=\frac{1}{2}c_{n}-\frac{3}{2}c_{n-2}-\mathsf{n}(2).\notag
\end{align}
Therefore we obtain
\begin{align}
\#\twotilt A
&= 2(\mathsf{m}(1)+\mathsf{m}(2)+\mathsf{m}(3)+\mathsf{m}(4)+\mathsf{m}(5))\notag\\
&= 2(\mathsf{n}(1)+2\mathsf{n}(2)+\mathsf{n}(4)+\mathsf{n}(5))-4\mathsf{n}(2)+4\mathsf{m}(2)\notag\\
&= || \mathbb{D}_{n}|| - 4\mathsf{n}(2) + 4\mathsf{m}(2) \notag\\
&= 6\cdot 4^{n-2} - 2c_{n-2} -4\mathsf{n}(2) + 2c_{n}-6c_{n-2}-4\mathsf{n}(2) \notag\\
&= 6\cdot 4^{n-2} + 2c_{n} - 4c_{n-1} - 4c_{n-2}. \notag
\end{align}
This finishes the proof.
\end{proof}

For the remained finite series $\mathbb{E}$, (II), (III), (IV) and (V), 
we just compute the number by using the formula (\ref{number G}) in Corollary \ref{number by graph}(2).

\begin{proposition} \label{type sporadic}
For each case \textnormal{$\mathbb{E}$, (II), (III), (IV)} and \textnormal{(V)}, 
the number $\twotilt A_G$ is given by the table of Theorem \ref{theorem2}.
\end{proposition}
\begin{proof}
The number for $\mathbb{E}_6$ is shown in Example \ref{example-E6}(2) and the others are similar. The detail is left to the reader. 
\end{proof}

\subsection*{Acknowledgements}
T.~Adachi is supported by JSPS KAKENHI Grant Number JP17J05537.
T.~Aoki is supported by JSPS KAKENHI Grant Number JP19J11408.
The authors thank to M.~Konishi for helpful discussions about the proof of Proposition \ref{tothm1}.

\end{document}